\newcommand {\ep} {\varepsilon}
\newcommand {\gm} {\gamma}
\newcommand {\ii} {\infty}
\newcommand {\dt} {\delta}
\newcommand {\al} {\alpha}
\newcommand {\bt} {\beta}
\newcommand {\lb} {\lambda}
\newcommand {\Lb} {\Lambda}
\newcommand {\sm} {\setminus}
\newcommand {\su} {\subset}
\newcommand {\wh} {\widehat}
\newcommand {\pp} {\perp}
\newcommand {\mc} {\mathcal}
\newcommand {\mb} {\mathbf}
\newcommand {\mbb} {\mathbb}
\newtheorem{teo}{Theorem}[section]
\newtheorem{pro}{Proposition}[section]
\newtheorem{cor}{Corollary}[section]
\theoremstyle{definition}
\newtheorem{rem}{Remark}[section]
\newtheorem{df}{Definition}[section]
\newtheorem{ex}{Example}[section]
\title{On individual ergodic theorems \\for semifinite von Neumann algebras}
\keywords{Semifinite von Neumann algebra, Dunford-Schwartz operator, noncommutative symmetric space, individual ergodic theorem, almost uniform convergence}
\subjclass[2010]{47A35(primary), 46L52(secondary)}
\begin{document}
\date{June 1, 2020}

\begin{abstract}
It is known that, for a positive Dunford-Schwartz operator in a noncommutative $L^p$-space, $1\leq p<\ii$, or, more generally, in a noncommutative Orlicz space with order continuous norm, the corresponding ergodic averages converge bilaterally almost uniformly. We show that these averages converge almost uniformly in each noncommutative symmetric space $E$ such that $\mu_t(x)\to 0$ as $t\to\ii$ for every $x\in E$, where $\mu_t(x)$ is the non-increasing rearrangement of $x$. Noncommutative Dunford-Schwartz-type multiparameter ergodic theorems are studied. A wide  range of noncommutative symmetric spaces for which Dunford-Schwartz-type individual ergodic theorems hold is outlined.
\end{abstract}

\author{VLADIMIR CHILIN}
\address{National University of Uzbekistan, Tashkent, Uzbekistan}
\email{vladimirchil@gmail.com; chilin@ucd.uz}
\author{SEMYON LITVINOV}
\address{Pennsylvania State University \\ 76 University Drive \\ Hazleton, PA 18202, USA}
\email{snl2@psu.edu}

\maketitle

\section{Introduction}
The study of noncommutative individual ergodic theorems in the space of measurable operators associated with
a semifinite von Neumann algebra $\mc M$ equipped with a faithful normal semifinite trace $\tau$ was initiated by F. J. Yeadon. In \cite{ye}, he derived, as a corollary of a noncommutative maximal ergodic inequality in $L^1(\mc M,\tau)$, an individual ergodic theorem for the so-called positive $L^1-L^\ii$\,-\,contraction acting in the noncommutative  space $L^1(\mc M,\tau)$.

The study of individual ergodic theorems beyond $L^1(\mc M,\tau)$ started much later with another fundamental paper by M. Junge and Q. Xu \cite{jx} where, among other results, individual ergodic theorem was extended to the case where a positive Dunford-Schwartz operator acted in the space $L^p(\mc M,\tau)$, $1<p<\ii$. In \cite{li}, this result, for positive $L^1-L^\ii$-\,contractions, was derived  directly from Yeadon's maximal ergodic inequality with the help of the notion of uniform equicontinuity in measure at zero of a family of linear maps from a Banach space into the space of $\tau$\,-\,measurable operators.

It was noticed in \cite{cl} that any positive $L^1-L^\ii$-\,contraction, after a unique extension, is a positive Dunford-Schwartz operator. Following this observation, various types of individual ergodic theorems for a positive Dunford-Schwartz operator in noncommutative spaces $L^p(\mc M,\tau)$ and in noncommutative Lorentz spaces $L^{p,q}(\mc M,\tau)$ were proved by first establishing maximal ergodic inequalities in $L^p(\mc M,\tau)$, $1<p<\ii$. In \cite{cl2}, utilizing the approach  that was developed in \cite{gl, cls, li}, an individual ergodic theorem for a positive Dunford-Schwartz operator in a noncommutative Orlicz space $L^\Phi(\mc M,\tau)$ with order continuous norm was proved.

In the present paper, we aim to extend noncommutative Dunford-Schwartz-type ergodic theorems in two directions:

\noindent
(A) In an attempt to answer the question how far beyond $L^1(\mc M,\tau)$ inside $L^1(\mc M,\tau)+\mc M$ can one
go for a Dunford-Schwartz-type theorem to remain valid, we have found the following. Like in the commutative case,
a noncommutative Dunford-Schwartz-type ergodic theorem, once established for a positive Dunford-Schwartz operator
in the space $L^1(\mc M,\tau)$, holds for significantly wider space of measurable operators. This symmetric space $\mc R_\tau$ consists of such operators $x\in L^1(\mc M,\tau)+\mc M$ for which $\mu_t(x)\to 0$ as $t\to \ii$, where $\mu_t(x)$ is the non-increasing rearrangement of $x$. Note that, in view of \cite{cl1}, there is a natural question whether the space $\mc R_\tau$ is maximal, that is, the largest (possibly symmetric) space for which the individual ergodic theorem holds for all hermitian Dunford-Schwartz operators.

\noindent
(B) Using the approach presented in \cite{li1}, we obtain almost uniform convergence in a variety of noncommutative individual ergodic theorems, some of which new and some previously known to hold only for the generally weaker bilaterally almost uniform convergence.

In Section 3, we show that almost uniform convergence is generally stronger than bilaterally almost convergence and present a general form of the Banach principle for the Banach space $L^p(\mc M,\tau)$, $1\leq p<\ii$, and almost uniform convergence.

A study of noncommutative Dunford-Schwartz-type individual ergodic theorems for actions of the group of integers is carried out in Section 4.

Section 5 of the paper is devoted to noncommutative Dunford-Schwartz-type ergodic theorems for actions of discrete and continuous multiparameter semigroups.

To underline the wealth of the space $\mc R_\tau$, we present, in Section 6, a variety of noncommutative symmetric spaces that are embedded in $\mc R_\tau$.

\section{Preliminaries}
Let $\mc M$ be a semifinite von Neumann algebra equipped with a faithful normal semifinite trace $\tau$. Let $\mc P(\mc M)$ stand for the lattice of projections in $\mc M$. If $\mb 1$ is the identity of $\mc M$ and $e\in\mc P(\mc M)$, we write $e^\pp=\mb 1-e$. Denote by $L^0=L^0(\mc M,\tau)$ the $*$\,-\,algebra of $\tau$\,-\,measurable operators affiliated with $\mc M$ \cite{ne}.

For every subset $E\subset L^0$, the set of all positive operators in $E$ will be denoted by $E_+$. The partial order in $L^0$ is given by its cone $L^0_+$ and is denoted by $\leq$.

Let $\| \cdot \|_\ii$ be the uniform norm in $\mc M$. Equipped with the  measure topology $t_\tau$ given by the system of (closed) neighborhoods of zero
\[
V(\ep,\dt)=\{ x\in L^0: \ \| xe\|_{\ii}\leq \dt \text{ \ for some \ } e\in \mc P(\mc M) \text{ \ with \ } \tau(e^{\pp})\leq \ep \},
\]
$\ep>0$, $\dt>0$, $L^0$ is a complete metrizable topological $*$\,-\,algebra \cite{ne}.

Let $x\in L^0$, and let $\{e_\lb\}_{\lb\ge0}$ be the spectral family of projections for the absolute value $| x|= (x^*x)^{1/2}$ of $x$. If $t>0$, then the {\it $t$-th generalized singular number of $x$} ({\it the non-increasing rearrangement of $x$}) is defined as
\[
\mu_t(x)=\inf\big\{\lb>0: \ \tau(e_\lb^\pp)\leq t\big\}
\]
(see, for example, \cite{fk}).

A non-zero linear  subspace  $E \su L^0$ with a Banach norm  $\|\cdot\|_E$ is called  {\it symmetric} ({\it fully symmetric}) on $(\mc M, \tau)$ if conditions
\[
x\in E, \ y\in L^0, \ \mu_t(y)\leq \mu_t(x) \text{ \ for all \ }t>0\]
\[
(respectively, \ x\in E, \ y\in L^0, \ \int \limits_0^s\mu_t(y)dt\leq  \int \limits_0^s\mu_t(x)dt \text{ \ for all\ \,}s>0
\]
\[
(\text{writing \ }y \prec\prec x))
\]
imply that $y\in E$ and $\| y\|_E\leq \| x\|_E$.

Let $L^0(0,\ii)$ be the linear space of (equivalence classes of) almost everywhere (a.e.) finite complex-valued Lebesgue measurable functions on the interval $(0,\ii)$, and let $\tau_\mu$ be the trace on $L^\ii(0,\ii)$ given by the integration with respect to Lebesgue measure $\mu$.

Give a symmetric function space $E\su L^0((0,\ii),\tau_\mu)$), define
\[
E(\mc M)=E(\mc M, \tau)=\big\{ x\in L^0: \ \mu_t(x)\in E\big\}
\]
and set
\[
\| x\|_{E(\mc M)}=\| \mu_t(x)\|_E,  \ x\in E(\mc M).
\]
It is shown in \cite{ks} that $(E(\mc M), \| \cdot \|_{E(\mc M)})$ is a symmetric space on $(\mc M, \tau)$.
If $E=L^p((0,\ii),\tau_\mu)$, $1\leq p<\ii$, then the space $(E(\mc M), \| \cdot \|_{E(\mc M)})$ coincides with the noncommutative $L^p$-\,space $L^p=L^p(\mc M)=(L^p(\mc M, \tau),\|\cdot \|_p)$. In addition, $L^\ii(\mc M)=\mc M$ and
\[
 (L^1\cap L^{\infty})(\mc M) = L^1(\mc M)\cap \mc M \text{ \ \ with \ \ } \|x\|_{L^1\cap\mc M}=\max \left \{\|x\|_1, \|x\|_\ii\right\},
\]
\[
(L^1 + L^\ii)(\mc M) = L^1(\mc M) + \mc M \text{ \ \ with \ \ }
\]
\[
\|x\|_{L^1+\mc M}=\inf\big\{ \|y\|_1+ \|z\|_\ii: \ x = y + z, \ y\in L^1, \ z \in\mc M\big\}=\int_0^1 \mu_t(x)\,dt
\]
(see \cite[Proposition 2.5]{ddp}). (For a comprehensive review of noncommutative $L^p$\,-\,spaces, see \cite{px, ye0}.) Note that $(L^1+\mc M , \|\cdot\|_{L^1+\mc M})$ is a fully symmetric space with Fatou property \cite[\S\,4]{dp}.

Since for a symmetric function space $E=E(0,\ii)$,
\[
L^1(0,\ii)\cap L^\ii(0,\ii)\su E \su L^1(0,\ii)+L^\ii(0,\ii)
\]
with continuous embeddings \cite[Ch.\,II, \S\,4, Theorem 4.1]{kps}, we also have
\[
L^1(\mc M)\cap\mc M\su E(\mc M)\su L^1(\mc M)+\mc M,
\]
with continuous embeddings.

Denote
\[
\mc R_\tau=\big\{x \in L^1+\mc M: \ \mu_t(x) \to 0 \text{ \ as \ } t\to \ii \big\}.
\]
Observe that $\mc R_\tau$ is a linear subspace of $L^1+\mc M$, $L^1\cap\mc M\su\mc R_\tau$, and $x\in\mc R_\tau$  if and only  if $x\in L^1+\mc M$ and $\tau\{|x|>\lb\}<\ii$ for all $\lb>0$.

The next proposition implies that $(\mc R_\tau, \|\cdot\|_{L^1+\mc M})$ is a Banach space.

\begin{pro}\cite[Proposition 2.7]{ddp}\label{p11}
$\mc R_\tau$ is the closure of $L^1 \cap \mc M$  in $L^1 + \mc M$.
\end{pro}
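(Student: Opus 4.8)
The plan is to establish the two inclusions $\ol{L^1\cap\mc M}\su\mc R_\tau$ and $\mc R_\tau\su\ol{L^1\cap\mc M}$ separately, the closure being taken in $(L^1+\mc M,\|\cdot\|_{L^1+\mc M})$. Throughout I would use two elementary facts about generalized singular numbers: the subadditivity estimate $\mu_{s+t}(x+y)\le\mu_s(x)+\mu_t(y)$ (see \cite{fk}), and the observation that, since $\mu_t(\cdot)$ is non-increasing, $\mu_t(y)\le\mu_1(y)\le\int_0^1\mu_s(y)\,ds=\|y\|_{L^1+\mc M}$ for every $t\ge1$; that is, convergence in $L^1+\mc M$ controls the whole tail $\{\mu_t:t\ge1\}$ uniformly.

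For the inclusion $\ol{L^1\cap\mc M}\su\mc R_\tau$ I would show that the defining condition of $\mc R_\tau$ survives limits. Let $x\in L^1+\mc M$ with $x_n\to x$, $x_n\in L^1\cap\mc M\su\mc R_\tau$. Given $\ep>0$, fix $n$ with $\|x-x_n\|_{L^1+\mc M}<\ep/2$, so that $\mu_t(x-x_n)<\ep/2$ for all $t\ge1$ by the tail bound. Since $\mu_t(x_n)\to0$, choose $T\ge1$ with $\mu_t(x_n)<\ep/2$ for $t\ge T$. Subadditivity then gives $\mu_{2t}(x)\le\mu_t(x_n)+\mu_t(x-x_n)<\ep$ for $t\ge T$, whence $\mu_s(x)\to0$ and $x\in\mc R_\tau$.

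The substantive direction is $\mc R_\tau\su\ol{L^1\cap\mc M}$, for which I would produce an explicit approximation by truncating the spectral family of $|x|$. Let $x\in\mc R_\tau$ have polar decomposition $x=u|x|$ and let $\{e_\lb\}$ be the spectral family of $|x|$; recall that $x\in\mc R_\tau$ yields $\tau(e_\lb^\pp)=\tau\{|x|>\lb\}<\ii$ for every $\lb>0$. For $0<a<b$ put $w=x\,(e_b-e_a)=u|x|e_{(a,b]}$. Then $\|w\|_\ii\le b$, and the support projection of $w$ is dominated by $e_a^\pp$, so $\tau$ of it is at most $\tau(e_a^\pp)<\ii$; hence $w\in L^1\cap\mc M$. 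Writing $x-w=xe_a+xe_b^\pp$, the first summand satisfies $\|xe_a\|_\ii\le a$, hence $\|xe_a\|_{L^1+\mc M}\le a$; and since $e_b^\pp$ commutes with $|x|$ one has $|xe_b^\pp|=|x|e_b^\pp$, so that $\mu_t(xe_b^\pp)=\mu_t(x)$ for $t<\tau(e_b^\pp)$ and $\mu_t(xe_b^\pp)=0$ otherwise, giving $\|xe_b^\pp\|_{L^1+\mc M}\le\|xe_b^\pp\|_1=\int_0^{\tau(e_b^\pp)}\mu_t(x)\,dt$.

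To finish I would make both error terms small: taking $a=\ep/2$ handles the first. For the second, note that $\tau(e_b^\pp)\downarrow0$ as $b\to\ii$ (the $e_b^\pp$ decrease to $0$ and are of finite trace for $x\in\mc R_\tau$), while $\int_0^s\mu_t(x)\,dt<\ii$ for every finite $s$ because $x\in L^1+\mc M$; hence by absolute continuity of the integral $\int_0^{\tau(e_b^\pp)}\mu_t(x)\,dt\to0$, and one chooses $b$ so large that it is $<\ep/2$. Then $w\in L^1\cap\mc M$ and $\|x-w\|_{L^1+\mc M}<\ep$, as required. I expect the main obstacle to be precisely this last estimate: one must arrange simultaneously that the bounded truncation lie in $L^1$ (which forces the lower cut-off $a>0$, since $xe_b$ alone may have infinite-trace support) and that the discarded large-value part $xe_b^\pp$ have small $L^1$-norm, and it is the identity $\|xe_b^\pp\|_1=\int_0^{\tau(e_b^\pp)}\mu_t(x)\,dt$ together with $\tau(e_b^\pp)\to0$ that reconciles these two requirements.
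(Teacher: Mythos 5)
Your proof is correct. Note that the paper offers no argument of its own for this statement --- it simply cites \cite[Proposition 2.7]{ddp} --- and your two-sided spectral truncation $w=x(e_b-e_a)$, with the lower cut-off $a>0$ forcing finite-trace support (so $w\in L^1\cap\mc M$) and the identity $\|xe_b^\pp\|_1=\int_0^{\tau(e_b^\pp)}\mu_t(x)\,dt$ combined with $\tau(e_b^\pp)\downarrow 0$ handling the large-value part, is precisely the standard argument from that reference; your first inclusion, via $\mu_t(y)\le\|y\|_{L^1+\mc M}$ for $t\ge 1$ and subadditivity of $\mu$, in fact shows the slightly stronger fact that $\mc R_\tau$ is closed in $L^1+\mc M$.
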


One can verify that if $x\in\mc R_\tau$, $y\in L^1+\mc M$ and $y\prec\prec x$, then $y\in \mc R_\tau$ and $\| y\|_{L^1+\mc M} \leq \| x\|_{L^1+\mc M}$. Therefore
$(\mc R_\tau, \|\cdot\|_{L^1+\mc M})$ is a noncommutative  fully symmetric space.

\vskip 5pt
Note that if $\tau(\mathbf 1) < \infty$, then $\mc M \subset L^1$ and $\mc R_\tau = L^1$.

\begin{pro}\label{p12}
If $\tau(\mb 1)=\ii$, then a symmetric space $E=E(\mc M,\tau)$ is contained in $\mc R_\tau$ if and only if $\mb 1\notin E$.
\end{pro}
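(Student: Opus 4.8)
The plan is to reduce everything to the elementary observation that the identity $\mb 1$ is the prototypical element lying in $L^1+\mc M$ but outside $\mc R_\tau$ precisely because $\tau(\mb 1)=\ii$. First I would record the computation $\mu_t(\mb 1)=1$ for every $t>0$: since $|\mb 1|=\mb 1$ has spectral family $e_\lb=0$ for $\lb<1$ and $e_\lb=\mb 1$ for $\lb\geq 1$, one gets $\tau(e_\lb^\pp)=\tau(\mb 1)=\ii>t$ when $\lb<1$ and $\tau(e_\lb^\pp)=0$ when $\lb\geq 1$, whence $\mu_t(\mb 1)=\inf\{\lb>0:\tau(e_\lb^\pp)\leq t\}=1$. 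In particular $\mu_t(\mb 1)\not\to 0$ as $t\to\ii$, so $\mb 1\notin\mc R_\tau$, while of course $\mb 1\in\mc M\su L^1+\mc M$. It is exactly here that the hypothesis $\tau(\mb 1)=\ii$ is used.

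For the forward implication I would argue by contraposition: if $\mb 1\in E$, then $E$ contains the element $\mb 1$, which by the above does not belong to $\mc R_\tau$, so $E\not\su\mc R_\tau$. This direction needs nothing beyond the preceding computation.

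For the converse, assume $\mb 1\notin E$ and take an arbitrary $x\in E$. Since $E\su L^1+\mc M$ with continuous embedding (as recorded in the Preliminaries), we already have $x\in L^1+\mc M$, so it remains only to check that $\mu_t(x)\to 0$ as $t\to\ii$. The function $t\mapsto\mu_t(x)$ is non-increasing and non-negative, hence converges to some $c\geq 0$; suppose, for contradiction, that $c>0$. Then $\mu_t(x)\geq c$ for all $t>0$, and comparison with the constant rearrangement $\mu_t(c\mb 1)=c$ gives $\mu_t(c\mb 1)\leq\mu_t(x)$ for every $t>0$. Applying the defining property of the symmetric space $E$ (with $x\in E$ and $c\mb 1\in L^0$) yields $c\mb 1\in E$, and since $E$ is a linear subspace this forces $\mb 1\in E$, contradicting the assumption. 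Hence $c=0$, i.e. $x\in\mc R_\tau$, and therefore $E\su\mc R_\tau$.

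I do not anticipate a genuine obstacle here: the whole argument rests on the single computation $\mu_t(\mb 1)=1$ together with the monotonicity of the singular-number function and the order-ideal property built into the definition of a symmetric space. The only point deserving care is the passage from the failure of $\mu_t(x)\to 0$ to a uniform positive lower bound $\mu_t(x)\geq c$, which is exactly what permits dominating a nonzero multiple of $\mb 1$ and triggering the contradiction through symmetry.
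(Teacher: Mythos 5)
Your proof is correct and follows essentially the same route as the paper: the computation $\mu_t(\mb 1)\equiv 1$ (so $\mb 1\notin\mc R_\tau$) for one direction, and for the other, using monotonicity of $t\mapsto\mu_t(x)$ to extract a positive limit $c$ and then the symmetric-space ideal property to force $\mb 1\in E$. The only cosmetic difference is that you compare $c\mb 1$ with $x$ and invoke linearity, whereas the paper compares $\mu_t(\mb 1)$ with $\al^{-1}\mu_t(x)$ directly --- the same argument up to rescaling.
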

\begin{proof}
As $\tau(\mb 1)=\ii$, we have $\mu_t(\mb 1) = 1$ for all $t > 0$, hence $\mb 1 \notin \mc R_\tau$. Therefore
$E$ is not contained in $\mc R_\tau$ whenever $\mb 1 \in E$.

Let $\mb 1\notin  E$. If $x\in E$ and $\lim\limits_{t\to\ii}\mu_t(x)=\al>0$, then $\mu_t(\mb 1)\equiv1\leq\al^{-1}\mu_t(x)$, implying $\mb 1\in E$, a contradiction. Thus $\mb 1\notin E$ entails $ E\su\mc R_\tau$.
\end{proof}

\section{A Banach principle for $L^p(\mc M,\tau)$ and almost uniform convergence}

Let $\mc N$ be a commutative von Neumann algebra equipped with a faithful normal finite trace $\mu$, and let $X$ be a Banach space. The classical Banach principle asserts that  if $M_n:X\to (L^0(\mc N, \mu),t_\mu)$ is a sequence of continuous linear maps such that
\[
\sup_n|M_n(x)|\in L^0(\mc N, \mu) \text{ \  for all\ \,} x\in X,
\]
then the set
\[
\big\{ x\in X: \ \{ M_n(x)\} \text{\ \,converges a.e.}\big\}
\]
is closed in $X$.

Theorem \ref{tau1} below is a noncommutative Banach principle for a semifinite von Neumann algebra and
almost uniform convergence in Egorov's sense.
\begin{df}
A net  $\{ x_\al\}_{\al\in A}\su L^0=L^0(\mc M,\tau)$ is said to converge to $x\in L^0$ {\it almost uniformly} ({\it a.u.}) ({\it bilaterally almost uniformly} ({\it b.a.u.})) if for every $\ep>0$ there exists $e\in\mc P(\mc M)$ such that $\tau(e^\pp)\leq\ep$ and $\lim\limits_{\al\in A}\| (x-x_\al)e\|_\ii= 0$ \ (respectively, $\lim\limits_{\al\in A}\| e(x-x_\al)e\|_\ii= 0$).
\end{df}

It is clear that a.u. convergence implies b.a.u. convergence. If $\mc M$ is commutative, a.u. and b.a.u. convergences coincide, which is not the case if $\mc M$ is not commutative:

\begin{ex}\label{e31}
Let $\mc M$ be a type $\operatorname {II}_1$ factor with $\tau(\mb 1)=1$. Let $\mc B$ be a maximal commutative $*$\,-\,subalgebra of $\mc M$. Choose a sequence $\{g_n\}_{n=1}^\ii\su\mc P(\mc B)$ such that $\tau(g_n)\to 0$ but $g_n\not\to 0$ a.u. and consider a sequence $\{e_n\}_{n=1}^\ii\su\mc P(\mc B)$ such that $e_n\downarrow 0$ and $\tau(g_n)=\tau(e_n)$ for each $n$. Since $\mc M$ is a factor and $\tau(g_n)=\tau(e_n)$, it follows that projections $g_n$ and $e_n$ are equivalent. Consequently, for every $n$ there exists a partial isometry $v_n\in\mc M$ such that $v_n v_n^* =g_n$ and $v_n^*v_n=e_n$.

We will show that $v_n^*\to 0$ b.a.u. but $v_n^*\not\to 0$ a.u. Indeed, since $e_n^\pp\uparrow\mb 1$ and $\|v_ne_n^\pp\|_\ii=0$ for every $n$, we have $v_n\to 0$ a.u., hence $v_n\to 0$ b.a.u. and so $v_n^*\to 0$ b.a.u. To show that $v_n^*\not\to 0$ a.u., let us assume that it does. Then for any $\ep >0$ there exists $q\in\mc P(\mc M)$ for which $\tau(q^\pp)\leq\ep$ and $\|v_n^*q\|_\ii\to 0$.

Let $\Phi$ be a conditional expectation of $\mc M$ onto the von Neumann algebra $\mc B$, and let $h=\big\{\Phi(q)\ge1/2\big\}$. Then we have
\[
\begin{split}
1-\ep&\leq\tau(q)=\tau(\Phi(q))=\tau(\Phi(q)h)+\tau(\Phi(q)h^\pp)<\tau(\Phi(qh))+2^{-1}\tau(h^\pp)\\
&=\tau(qh)+2^{-1}\tau(h^\pp)=\tau(hqh)+2^{-1}\tau(h^\pp)\leq\tau (h)+2^{-1}(1-\tau (h)),
\end{split}
\]
implying that $\tau(h^\pp)\leq2\ep$. Besides, $h\leq2\Phi(q)$ entails that
\[
\begin{split}
\frac12\|g_nhg_n\|_\ii&\leq\|g_n\Phi(q)g_n\|_\ii=\|\Phi(g_nq)\|_\ii\\
&=\|g_nq\|_\ii=\|v_nv_n^*q\|_\ii\leq\|v_n^*q\|_\ii\to 0.
\end{split}
\]
Therefore, we have $\|g_nh\|_\ii\to0$, hence $g_n\to0$ a.u., a contradiction.
\end{ex}

In what follows the notion of bilaterally uniform equicontinuity in measure at zero of a sequence of maps from a normed
space into the space $L^0(\mc M,\tau)$ plays an important role:

\begin{df}
Let   $(X, \| \cdot \|)$ be a normed space, and let $X_0\su X$ be such that the neutral element of $X$ is an accumulation point of $X_0$. A sequence of maps $M_n: X\to L^0$ is  called {\it bilaterally uniformly equicontinuous in measure} ({\it b.u.e.m.}) {\it at zero} in $X_0$ if for every $\ep>0$ and $\dt>0$ there exists $\gm>0$ such that, given $x\in X_0$ with $\| x\|<\gm$, there is a projection $e\in \mc P(\mc M)$  satisfying conditions
\[
\tau(e^\pp)\leq \ep \text{ \ \ and \ \ }  \sup_n\| eM_n(x)e\|_\ii\leq \dt.
\]
\end{df}

\begin{rem}
It is easy to see \cite[Proposition 1.1]{li} that, in the commutative case, bilaterally uniform equicontinuity in measure at zero in $X$ of a sequence $M_n: X\to L^0$ is equivalent to the continuity in measure at zero of the maximal operator
\[
M^*(f)=\sup \limits_n |M_n(f)|, \ f\in X.
\]
\end{rem}

The following proposition provides a critical tool for proving a.u. convergence of ergodic averages. Presented proof is a modification of the proof given in \cite{li1}.

\begin{pro}\label{pau}
Let $(X, \|\cdot\|)$ be a Banach space, $M_n:X\to L^0$ a sequence of linear maps that is b.u.e.m. at zero in $X$.
Then the set
\[
\mc L=\big\{x\in X:\ \{ M_n(x)\} \text{\ converges a.u.}\big\}
\]
is closed in  $X$.
\end{pro}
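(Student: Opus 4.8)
The plan is to prove closedness by showing that $\mc L$ contains the limit of any sequence drawn from it. Since the $M_n$ are linear and a.u. convergence is stable under linear combinations (intersect the two governing projections), $\mc L$ is a linear subspace; and a sequence in $L^0$ converges a.u. if and only if it is a.u. Cauchy, meaning that for every $\ep>0$ there is $e\in\mc P(\mc M)$ with $\tau(e^\pp)\le\ep$ along which the sequence is $\|\cdot\|_\ii$-Cauchy, the nontrivial implication following from completeness of $\mc M$. Thus, fixing $x_k\to x$ in $X$ with $x_k\in\mc L$ and a number $\ep>0$, the goal is to produce a single projection $e$ with $\tau(e^\pp)\le\ep$ for which $\sup_{n,m\ge N}\|(M_n(x)-M_m(x))e\|_\ii\to0$ as $N\to\ii$.

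First I would assemble $e$ from two families of projections. Applying the b.u.e.m. hypothesis at zero to the pairs $(\ep\,2^{-j-1},2^{-j})$ yields thresholds $\gm_j>0$; choosing $k_j$ with $\|x-x_{k_j}\|<\gm_j$ produces projections $g_j$ with $\tau(g_j^\pp)\le\ep\,2^{-j-1}$ and $\sup_n\|g_jM_n(x-x_{k_j})g_j\|_\ii\le2^{-j}$. Since each $x_{k_j}\in\mc L$, the sequence $\{M_n(x_{k_j})\}_n$ is a.u. Cauchy, so there are projections $h_j$ with $\tau(h_j^\pp)\le\ep\,2^{-j-1}$ and $\sup_{n,m\ge N}\|(M_n(x_{k_j})-M_m(x_{k_j}))h_j\|_\ii\to0$ as $N\to\ii$. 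I then set $e=\bigwedge_j(g_j\wedge h_j)$, so that $\tau(e^\pp)\le\sum_j\big(\tau(g_j^\pp)+\tau(h_j^\pp)\big)\le\ep$ and $e\le g_j\wedge h_j$ for every $j$.

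With this $e$ in hand, the decomposition
\[
(M_n-M_m)(x)=(M_n-M_m)(x_{k_j})+(M_n-M_m)(x-x_{k_j})
\]
separates a principal term from a tail. Right-multiplying by $e\le h_j$, the principal term obeys $\|(M_n-M_m)(x_{k_j})e\|_\ii\le\|(M_n-M_m)(x_{k_j})h_j\|_\ii\to0$ as $n,m\to\ii$ for the chosen $j$. For a prescribed tolerance one fixes $j$ so large that $2^{-j}$ is negligible, renders the tail contribution small, and then lets $N\to\ii$ to annihilate the principal term.

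The main obstacle is exactly the passage, for the tail, from the bilateral estimate furnished by b.u.e.m. to the one-sided estimate $\|(M_n-M_m)(x-x_{k_j})e\|_\ii$ required by a.u. convergence. As Example \ref{e31} shows, bilateral smallness of an operator on the range of a projection need not entail one-sided smallness, so the bound on $g_jM_n(x-x_{k_j})g_j$ cannot be transferred directly; this is precisely where the modification of \cite{li1} enters. The remedy is to refine the $g_j$ — replacing each by a suitable subprojection built from spectral projections of $|M_n(x-x_{k_j})|$, or peeling off a component supported on a projection of trace at most $\ep\,2^{-j-1}$ that is absorbed into $e^\pp$ — so as to secure a genuinely one-sided tail bound $\sup_n\|M_n(x-x_{k_j})e\|_\ii\le C\,2^{-j}$. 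Granting this, the triangle inequality gives $\limsup_{n,m\to\ii}\|(M_n-M_m)(x)e\|_\ii\le C'2^{-j}$ for every $j$, whence the left-hand side vanishes; thus $\{M_n(x)\}$ is a.u. Cauchy, $x\in\mc L$, and $\mc L$ is closed.
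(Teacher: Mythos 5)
Your skeleton---approximate $x$ from $\mc L$, control the principal term via a.u.\ Cauchyness of $\{M_n(x_{k_j})\}$, and convert the bilateral b.u.e.m.\ estimate on the tail into a one-sided one---is the paper's, and you correctly locate the crux in that conversion. But the step you defer to a ``remedy'' is a genuine gap: no refinement of $g_j$ yields a single projection $e$ of small co-trace with $\sup_n\|M_n(x-x_{k_j})e\|_\ii\leq C2^{-j}$ uniformly over \emph{all} $n\in\mbb N$. The actual conversion device is the support projection $q_{k,n}=\mathbf 1-\mathbf r\big(g_k^\pp M_n(x_k)\big)$: then $g_k^\pp M_n(x_k)q_{k,n}=0$, so $M_n(x_k)(g_k\wedge q_{k,n})=g_kM_n(x_k)g_k\,(g_k\wedge q_{k,n})$ inherits the bilateral bound, while equivalence of left and right supports gives $\tau(q_{k,n}^\pp)\leq\tau(g_k^\pp)$. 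Crucially, $q_{k,n}$ depends on $n$ and each one costs trace up to $\tau(g_k^\pp)$; intersecting over infinitely many $n$ costs infinite trace, so the trick can only be run over finitely many $n$ at a time. Moreover, a uniform-in-$n$ one-sided upgrade at bounded trace cost is false in general: it would promote every bilateral maximal bound (e.g., Yeadon's inequality in $L^1$) to a one-sided one, and Example \ref{e31} exhibits exactly this obstruction.

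The paper's proof is organized around this limitation, and that organization is what your proposal is missing. It first extracts, from the a.u.\ convergence of $\{M_n(z_m)\}$, projections $h_m$ \emph{and times} $N_m$ with $\|(M_n(z_m)-M_{n'}(z_m))h_m\|_\ii\leq\dt/(3\cdot2^m)$ for $n,n'\ge N_m$; it then chooses the tails $x_k=y_{m_k}$ with trace budget $\tau(g_k^\pp)\leq\ep/(N_{m_{k+1}}2^{k+2})$---pre-divided by $N_{m_{k+1}}$ precisely so that the \emph{finite} infimum $f_k=\bigwedge_{n\leq N_{m_{k+1}}}(g_k\wedge q_{k,n})$ still satisfies $\tau(f_k^\pp)\leq\ep/2^{k+1}$. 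This produces one-sided tail bounds only on the finite window $n\leq N_{m_{k+1}}$, hence the estimate $\|(M_n(x)-M_{n'}(x))e\|_\ii\leq\dt/2^{m_k}$ only for $N_{m_k}\leq n,n'\leq N_{m_{k+1}}$; arbitrary $n,n'\ge N_{m_1}$ are then handled by telescoping through the window endpoints, the geometric sum $\sum_k\dt/2^{m_k}\leq\dt$ closing the a.u.\ Cauchy estimate, after which completeness of $L^0$ under a.u.\ convergence finishes as you indicate. Insert this blocking-and-chaining mechanism---each approximant serving only its own finite block of indices, the blocks linked by a telescoping sum---and your argument becomes the paper's; without it, the per-$j$ limsup bound you aim for cannot be justified.
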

\begin{proof}
Let a sequence $\{ z_m\}\su\mc C$ and $x\in X$ be such that $\|z_m-x\|\to 0$.
Denote $y_m=z_m-x$ and fix $\ep>0$ and $\dt>0$. 

For a given $m$, as $x+y_m\in\mc C$, the sequence $\{M_n(x+y_m)\}$ converges a.u., implying that there exists $h_m\in\mc P(\mc M)$ and $N_m\in\mathbb N$ such that
\[
\tau(h_m^\perp)\leq\frac\ep{2^{m+1}}\text{ \ and \ } \|(M_n(x+y_m)-M_{n'}(x+y_m))h_m\|_\ii\leq\frac\dt{3\cdot2^m}\ \ \forall \ n,n'\ge N_m.
\]
(Without loss of generality, assume that $N_1\leq N_2\leq\dots\ $.)  

\noindent
If $h=\bigwedge\limits_mh_m$, then it follows that
\[
\tau(h^\perp)\leq\frac\ep2\text{ \ \ and \ \ }\|(M_n(x+y_m)-M_{n'}(x+y_m))h\|_\ii\leq\frac\dt{3\cdot2^m}\ \ \forall \ n,n'\ge N_m.
\]

Next, since $\|y_m\|\to0$ and $\{M_n\}$ is b.u.e.m. at zero on $X$, for every $k\in\mathbb N$ there exists $y_{m_k}=:x_k$ and $g_k\in\mc P(\mc M)$ such that
\[
\tau(g_k^\perp)\leq\frac\ep{N_{m_{k+1}}\,2^{k+2}}\text{ \ \ and\ \ } \sup_n\|g_kM_n(x_k)g_k\|_\ii\leq\frac\dt{3\cdot2^{m_k}}
\]
(without loss of generality, $m_1\leq m_2\leq\dots$).

Let $\mathbf l(y)$ ($\mathbf r(y)$) be the left (respectively, right) support of an operator $y\in L^0$. 
Set $q_{k,n}=\mathbf 1-\mathbf r(g_k^\perp M_n(x_k))$. Since for any $y\in L^0$ the projections 
$\mathbf l(y)\in \mc P(\mc M)$ and $\mathbf r(y)\in \mc P(\mc M)$ are equivalent, it follows that
\[
\tau(q_{k,n}^\perp)=\tau(\mathbf r(g_k^\perp M_n(x_k)))=\tau(\mathbf l (g_k^\perp M_n(x_k)))\leq \tau(g_k^\perp)\leq\frac \ep{N_{m_{k+1}}\,2^{k+2}}.
\]
Also,
\[
M_n(x_k) q_{k,n} =g_kM_n(x_k) q_{k,n} +g_k^\perp  M_n(x_k) q_{k,n}=g_kM_n(x_k) q_{k,n}.
\]
Therefore, letting $g_{k,n}=g_k\land q_{k,n}$, we obtain $\tau(g_{k,n}^\perp)\leq\displaystyle\frac\ep{N_{m_{k+1}}\,2^{k+1}}$ and
\[
M_n(x_k)g_{k,n}=M_n(x_k)q_{k,n}g_{k,n}=g_kM_n(x_k)q_{k,n}g_{k,n} = g_kM_n(x_k) g_kg_{k,n},
\]
implying
\[
\sup_n\|M_n(x_k)g_{k,n}\|_\ii\leq\|g_kM_n(x_k)g_k\|_\ii\leq\frac\dt{3\cdot2^{m_k}}.
\]

If we put $f_k=\bigwedge\limits_{n\leq N_{m_{k+1}}}g_{k,n}$, then it follows that
\[
\tau(f_k^\perp)\leq\frac \ep{2^{k+1}}\text{ \ \ and \ \ } \sup_{n\leq N_{m_{k+1}}}\| M_n(x_k) f_k\|_\ii\leq\frac\dt{3\cdot2^{m_k}}\text{ \ \ for all \ } k.
\]
Further, letting $f=\bigwedge\limits_kf_k$, we arrive at
\[
\tau(f^\perp)\leq\frac\ep2\text{ \ \ and \ \ }\sup_{n\leq N_{m_{k+1}}}\|M_n(x_k)f\|_\ii\leq\frac\dt{3\cdot2^{m_k}}\text{ \ \ for all \ } k.
\]

Now, let $e=h\land f$. Then $\tau(e^\perp)\leq\ep$ and $N_{m_k}\leq n, n'\leq N_{m_{k+1}}$, in view of $x_k=y_{m_k}$, implies that
\[
\begin{split}
\|(M_n(x)-M_{n'}(x))e\|_\ii&\leq\|(M_n(x+x_k)-M_{n'}(x+x_k))h\|_\ii\\
&+\|M_n(x_k)f\|_\ii+\|M_{n'}(x_k)f\|_\ii\leq\frac\dt{2^{m_k}}.
\end{split}
\]
Therefore, since $m_1\leq m_2\leq\dots$ implies, as  $N_1\leq N_2\leq\dots$, that $N_{m_1}\leq N_{m_2}\leq\dots$, we have
\[
\|(M_n(x)-M_{n'}(x))e\|_\ii\leq\sum_{k=1}^\ii\|(M_{N_{m_{k+1}}}(x)-M_{N_{m_k}}(x))e\|_\ii\leq\sum_{k=1}^\ii\frac\dt{2^{m_k}}\leq\dt
\]
for all $n, n'\ge N_{m_1}$ (we can assume without loss of generality that $N_{m_1}=n<n'=N_{m_{k_0}}$ for some $k_0$). Therefore, the sequence $\{M_n(x)\}$ is a.u. Cauchy in $L^0$. Since $L^0$ is complete with respect to a.u. convergence \cite[Remark 2.4]{cls}, we have $x\in\mc C$, which completes the argument.
\end{proof}

When $X=L^p$, $1\leq p<\ii$, we can say more:

\begin{teo}\label{tau1}
Let $1\leq p<\ii$ and $M_n: L^p\to (L^0,t_\tau)$ be a sequence of positive continuous linear maps such that, given $x\in L^p$ and
$\ep>0$, there exists $e\in\mc P(\mc M)$ satisfying conditions
\[
\tau(e^\pp)\leq \ep \text{ \ \ and \ \ } \sup_n\|eM_n(x)e\|_\ii<\ii.
\]
Then the set
\[
\big\{ x\in L^p: \ \{ M_n(x)\} \text{\ converges a.u.}\big\}
\]
is closed in $L^p$.
\end{teo}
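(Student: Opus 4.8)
The plan is to deduce the theorem from Proposition \ref{pau}: once it is known that, under the stated hypotheses, the sequence $\{M_n\}$ is b.u.e.m. at zero in $L^p$, Proposition \ref{pau} immediately gives that the a.u.-convergence set is closed. So the entire problem reduces to upgrading the \emph{pointwise} bilateral boundedness in measure supplied by the hypothesis (for each $x$ and $\ep$, a projection $e$ with $\tau(e^\pp)\leq\ep$ and $\sup_n\|eM_n(x)e\|_\ii<\ii$) to the \emph{uniform} equicontinuity in measure demanded by the definition of b.u.e.m. This is precisely the noncommutative Banach-principle step, and it is here that positivity of the $M_n$ and completeness of $L^p$ will be decisive.

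First I would reduce to the positive cone. Every $x\in L^p$ decomposes into four positive parts, each of $L^p$-norm at most $\|x\|_p$, and one has the elementary compression bound $\|eAe\|_\ii\leq\|e'Ae'\|_\ii$ whenever $e\leq e'$ (valid for any $A$, since $eAe=e(e'Ae')e$ and compressions do not increase the norm). Combining these, b.u.e.m. at zero on $L^p_+$ forces b.u.e.m. at zero on all of $L^p$: given the four parts of a small $x$, one takes the infimum of the four projections produced on the cone, sums the four norm estimates, and invokes the triangle inequality. Hence it suffices to verify the b.u.e.m. condition for positive $x$ of small norm.

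To establish b.u.e.m. on $L^p_+$ I would argue by contradiction. If it fails, there are $\ep_0,\dt_0>0$ and, for each $k$, a positive $x_k$ with $\|x_k\|_p<2^{-k}$ such that \emph{every} projection $e$ with $\tau(e^\pp)\leq\ep_0$ satisfies $\sup_n\|eM_n(x_k)e\|_\ii>\dt_0$. Since $\sum_k k\,2^{-k}<\ii$ and $L^p$ is complete, the series $x=\sum_k k\,x_k$ converges in $L^p$ to a positive element with $x\geq k\,x_k$ for every $k$. Positivity and linearity give $M_n(x)\geq k\,M_n(x_k)\geq 0$, so compressing by an arbitrary projection $e$ and using monotonicity of the operator norm on positive elements yields $\|eM_n(x)e\|_\ii\geq k\,\|eM_n(x_k)e\|_\ii$ for every $n$; taking the supremum over $n$ gives $\sup_n\|eM_n(x)e\|_\ii\geq k\,\dt_0$ for every admissible $e$. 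Now apply the hypothesis to this fixed $x$ with $\ep=\ep_0$: it produces one projection $e_*$ with $\tau(e_*^\pp)\leq\ep_0$ and $\sup_n\|e_*M_n(x)e_*\|_\ii<\ii$. But $e_*$ is admissible for every $x_k$, so the preceding estimate forces $\sup_n\|e_*M_n(x)e_*\|_\ii\geq k\,\dt_0$ for all $k$, which is absurd.

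The crux — and the step I expect to be the main obstacle — is exactly this last passage: one must arrange that a \emph{single} projection witnesses the bad behaviour of \emph{all} the summands at once. This is supplied by the strong, universally quantified form of the negated b.u.e.m. condition, under which each $x_k$ is uncontrollable for every projection with $\tau(e^\pp)\leq\ep_0$, in particular for the one $e_*$ that the hypothesis attaches to the sum $x$. With b.u.e.m. at zero in $L^p$ established, Proposition \ref{pau} applies verbatim and the theorem follows. Note that only linearity, positivity, and completeness of $L^p$ are genuinely used; the continuity of the $M_n$ enters merely as the ambient requirement that they map into $L^0$.
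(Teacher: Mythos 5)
Your proof is correct, and its skeleton coincides with the paper's: show $\{M_n\}$ is b.u.e.m. at zero on the positive cone, pass to all of $L^p$ via the four-positive-parts decomposition, and invoke Proposition \ref{pau}. The difference is that the paper treats both preliminary steps as black boxes --- it cites \cite[Theorem 3.2]{cl0} with $E=X_+=L^p_+$ for the equicontinuity on the cone and \cite[Lemma 4.1]{li} for the extension to $L^p$ --- whereas you prove them directly. Your condensation-of-singularities argument for the key step is sound: the negation of b.u.e.m. is quantified correctly (each bad $x_k$ with $\|x_k\|_p<2^{-k}$ defeats \emph{every} projection $e$ with $\tau(e^\pp)\le\ep_0$, so the single witness $e_*$ supplied by the hypothesis for $x=\sum_k k\,x_k$ works for all $k$ simultaneously); the order inequality $x\ge k\,x_k$ survives the $L^p$-limit because the positive cone is closed in the measure topology; and the monotonicity $0\le a\le b$ $\Rightarrow$ $\|a\|_\ii\le\|b\|_\ii$ holds in $L^0$ with the convention that the norm of an unbounded element is $+\ii$, so $\sup_n\|e_*M_n(x)e_*\|_\ii\ge k\,\dt_0$ for every $k$ indeed contradicts the hypothesis. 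Your closing remark is also accurate and worth noting: your route uses only linearity and positivity (Proposition \ref{pau} likewise never invokes continuity), so positivity lets you bypass the Baire-category machinery of a general Banach principle; what the paper's citations buy instead is brevity. The only caveat is bookkeeping you compress in the cone-to-space reduction --- one must run the cone statement with $\ep/4$ and $\dt/4$ and use $\|e M_n(x_j)e\|_\ii\le\|e_jM_n(x_j)e_j\|_\ii$ for $e=\bigwedge_j e_j\le e_j$ --- but that is exactly the content of \cite[Lemma 4.1]{li} and is carried out as you sketch.
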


\begin{proof}
By \cite[Theorem 3.2]{cl0} with $E=X_+=L^p_+$, the sequence $\{M_n\}$ is b.u.e.m. at zero in $L^p_+$, which, by
\cite[Lemma 4.1]{li}, implies that $\{M_n\}$ is b.u.e.m. at zero in $L^p$, and the result follows by Proposition \ref{pau}.
\end{proof}

\section{Single parameter individual ergodic theorems in $\mc R_\tau$}

A linear operator $T: L^1+\mc M\to L^1+\mc M$ is called a {\it Dunford-Schwartz operator} (writing $T\in DS)$ if
$T(L^1)\su L^1$, $T(\mc M)\su\mc M$ and
\[
\|T(x)\|_1 \leq\| x\|_1\text{ \ for all\ \,}x\in L^1\text{ \ and \ }\| T(x)\|_\ii \leq\| x\|_\ii\text{ \ for all\ \,}x\in\mc M.
\]
If a Dunford-Schwartz operator $T$ is  positive, that is, $T(x)\ge 0$ whenever $x\ge 0$, we shall write $T\in DS^+$.

If $T\in DS$, then $T(x )\prec\prec x$ for all $x\in L^1+\mc M$ \cite[Theorem 4.7]{ddp}, implying that
$T(E)\su E$ and $\| T\|_{E\to E}\leq1$ for every fully symmetric space $E=E(\mc M,\tau)$.

Given $T\in DS$ and $x\in L^1+\mc M$, denote
\begin{equation}\label{e13}
A_n(x)=A_n(T)( x)=\frac 1n\sum_{k=0}^{n-1} T^k(x), \ \ n=1,2,\dots,
\end{equation}
the corresponding Ces\'aro averages of the operator $x$.

\vskip 5pt
The following noncommutative individual ergodic theorem was established in \cite[Theorem 2]{ye}.

\begin{teo}\label{t12}
Let $T\in DS^+$ and $x\in L^1$. Then the averages (\ref{e13}) converge b.a.u. to some $\wh x\in L^1$.
\end{teo}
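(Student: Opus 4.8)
The plan is to follow the classical three-step scheme for an individual ergodic theorem: a maximal ergodic inequality, a Banach-principle closedness argument, and convergence on a dense subspace. The technical heart is Yeadon's maximal ergodic inequality in $L^1$: for every $x\in L^1_+$ and every $\lb>0$ there is a projection $e\in\mc P(\mc M)$ with
\[
\tau(e^\pp)\leq\frac{c}{\lb}\,\|x\|_1\qquad\text{and}\qquad \sup_n\|eA_n(x)e\|_\ii\leq\lb,
\]
where $c$ is an absolute constant. This weak-type estimate, which quantifies how large a corner one must excise to bound the maximal operator by $\lb$, is the genuinely hard analytic input; I would establish it following Yeadon, through a maximal lemma that exploits the positivity and the $L^1$-contractivity of $T$.

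Granting this inequality, the first step is to turn it into equicontinuity. Choosing $\lb=\dt$ and $\gm=\ep\dt/c$, the display shows that any $x\in L^1_+$ with $\|x\|_1<\gm$ admits a projection $e$ with $\tau(e^\pp)\leq\ep$ and $\sup_n\|eA_n(x)e\|_\ii\leq\dt$; thus $\{A_n\}$ is b.u.e.m. at zero in $L^1_+$, hence in $L^1$ by the linearity device of \cite[Lemma 4.1]{li}. The bilateral analogue of Proposition \ref{pau}, proved by the same scheme with $\|(\cdot)e\|_\ii$ replaced by $\|e(\cdot)e\|_\ii$, then shows that $\mc C=\{x\in L^1:\ \{A_n(x)\}\text{ converges b.a.u.}\}$ is closed in $L^1$. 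It therefore suffices to produce a dense subspace of $L^1$ on which b.a.u. convergence holds.

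For the dense subspace I would use the splitting into fixed points and coboundaries. If $Tx=x$, then $A_n(x)=x$ converges trivially; if $x=y-T(y)$ with $y\in L^1\cap\mc M$, then
\[
A_n(x)=\frac1n\big(y-T^n(y)\big),\qquad \|A_n(x)\|_\ii\leq\frac2n\,\|y\|_\ii\to0,
\]
so the averages tend to $0$ uniformly, a fortiori b.a.u. Hence $\mc C$ contains $\mathrm{Fix}(T)+(\mb 1-T)(L^1\cap\mc M)$. To see this manifold is large enough I would pass to $L^2$, where $T$ is a Hilbert-space contraction: von Neumann's mean ergodic theorem gives $L^2=\mathrm{Fix}(T)\oplus\ol{(\mb 1-T)L^2}$, and since $L^1\cap\mc M$ is $\|\cdot\|_2$-dense in $L^2$ and $\mb 1-T$ is $L^2$-bounded, $(\mb 1-T)(L^1\cap\mc M)$ is $\|\cdot\|_2$-dense in $\ol{(\mb 1-T)L^2}$. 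Running the closedness argument in $L^2$ as well, with the $L^2$ maximal inequality supplied by interpolating Yeadon's estimate, I obtain that $\mc C_2=\{x\in L^2:\ \{A_n(x)\}\text{ converges b.a.u.}\}$ equals $L^2$. In particular $L^1\cap\mc M\su\mc C_2$, so $L^1\cap\mc M\su\mc C$, and the $L^1$-closedness of $\mc C$ together with the density of $L^1\cap\mc M$ in $L^1$ forces $\mc C=L^1$. Finally, $\wh x$ lies in $L^1$ with $\|\wh x\|_1\leq\|x\|_1$ by lower semicontinuity of the $L^1$-norm under b.a.u. convergence of the $L^1$-bounded sequence $\{A_n(x)\}$.

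I expect the maximal ergodic inequality to be the principal obstacle: once it is in hand the equicontinuity and closedness steps are formal, whereas its proof demands a delicate weak-type estimate and the passage from the single contraction $T$ to the whole family of its powers. A secondary difficulty is the density step, where the non-reflexivity of $L^1$ blocks a direct mean ergodic argument and makes the detour through $L^2$, together with a maximal inequality there, essentially unavoidable.
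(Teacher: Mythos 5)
The paper does not actually prove Theorem \ref{t12}: it is imported wholesale from Yeadon \cite[Theorem 2]{ye}, so your proposal can only be compared with Yeadon's scheme and with the machinery this paper builds around it for the neighboring results. Judged that way, your proof is correct and is essentially the canonical route: Yeadon's weak type $(1,1)$ maximal inequality (the paper's Theorem \ref{tye}), bilateral uniform equicontinuity at zero, a Banach-principle closedness statement, and a dense subspace obtained from the von Neumann mean ergodic decomposition in $L^2$ with coboundaries taken from $L^1\cap\mc M$ --- the same skeleton the paper uses for the stronger a.u.\ statement of Theorem \ref{t43}. Two remarks are in order. First, the ``bilateral analogue of Proposition \ref{pau}'' you invoke does exist and is in fact \emph{easier} than the a.u.\ version: for b.a.u.\ convergence the two-sided cutoffs $eM_n(\cdot)e$ compose directly, and one does not need the support-projection trick $q_{k,n}=\mb 1-\mathbf r(g_k^\pp M_n(x_k))$ that Proposition \ref{pau} uses to convert two-sided bounds into one-sided ones; this bilateral Banach principle is precisely the result of \cite{gl}. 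Second, your phrase ``the $L^2$ maximal inequality supplied by interpolating Yeadon's estimate'' is the one soft spot: genuine interpolation of noncommutative maximal inequalities is delicate (it is the substance of \cite{jx}) and should not be treated as formal. Fortunately you do not need it: b.u.e.m.\ at zero of $\{A_n\}$ in $L^2_+$ follows from an elementary spectral splitting --- for $x\in L^2_+$ write $x=xe_{[0,\lb]}+xe_{(\lb,\ii)}$, where $\{e_B\}$ is the spectral measure of $x$, so that $\|A_n(xe_{[0,\lb]})\|_\ii\leq\lb$ by the $L^\ii$-contractivity of $A_n$, while $xe_{(\lb,\ii)}\in L^1_+$ with $\|xe_{(\lb,\ii)}\|_1\leq\lb^{-1}\|x\|_2^2$, and then apply Yeadon's inequality to the second piece; this is exactly \cite[Proposition 4.2]{li}, which the paper itself cites. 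With that reading, every step of your argument is sound, including the restriction of coboundaries to $y\in L^1\cap\mc M$ (which correctly sidesteps having to control $\frac1n T^n(y)$ for unbounded $y\in L^2$) and the final membership $\wh x\in L^1$, which follows since b.a.u.\ convergence implies convergence in measure (bilateral and ordinary measure topologies coincide, \cite[Theorem 2.2]{cls}) and the unit ball of $L^1$ is closed in measure.
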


An extension of Theorem \ref{t12} to noncommutative spaces $L^p$, $1<p<\ii$, was obtained in \cite{jx} (see also \cite{cl} and \cite{li}):

\begin{teo}\label{t121}
If $T\in DS^+$ and $x\in L^p$, $1<p<\ii$, then the averages (\ref{e13}) converge b.a.u. to some $\wh x\in L^p$.
If $p\ge 2$, then these averages converge also a.u.
\end{teo}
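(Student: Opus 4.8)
The plan is to combine the mean ergodic theorem in $L^p$ with the Banach principle of Proposition \ref{pau} (and Theorem \ref{tau1}), thereby reducing the statement to two tasks: a.u./b.a.u. convergence on a conveniently chosen \emph{dense} subspace, and a noncommutative maximal ergodic inequality that makes the convergence set closed. Almost everything is soft; the maximal inequality is the single deep ingredient.

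First I would pin down the limit. Since $1<p<\ii$, the space $L^p$ is reflexive, and because $T(x)\prec\prec x$ for $T\in DS$ the operator $T$ is a contraction on the fully symmetric space $L^p$, i.e. $\|T\|_{L^p\to L^p}\leq1$. The classical mean ergodic theorem then gives the norm decomposition $L^p=F\oplus\ol{(I-T)L^p}$, where $F=\{x\in L^p: T(x)=x\}$, together with $\|A_n(x)-\wh x\|_p\to0$, where $\wh x\in F\su L^p$ is the projection of $x$ onto $F$. This exhibits the limit $\wh x\in L^p$ and forces any a.u. or b.a.u. limit to coincide with it.

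Next I would establish a.u. convergence on a dense subspace. On $F$ the averages are constant, $A_n(x)=x$, so they converge a.u. trivially. For a coboundary $(I-T)z$ with $z\in L^1\cap\mc M$ the telescoping identity gives $A_n((I-T)z)=\tfrac1n(z-T^n z)$, whence $\|A_n((I-T)z)\|_\ii\leq\tfrac2n\|z\|_\ii\to0$; uniform convergence is a fortiori a.u. convergence. Since $L^1\cap\mc M$ is dense in $L^p$ and $I-T$ is bounded, the subspace $F+(I-T)(L^1\cap\mc M)$ is dense in $F\oplus\ol{(I-T)L^p}=L^p$, and on it the averages converge a.u.

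It remains to transfer convergence to all of $L^p$, and here the only substantial step appears. The maps $A_n:L^p\to L^0$ are positive and continuous, so by Theorem \ref{tau1} (equivalently, by Proposition \ref{pau} once b.u.e.m. is known) it suffices to verify that for each $x\in L^p$ and $\ep>0$ there is $e\in\mc P(\mc M)$ with $\tau(e^\pp)\leq\ep$ and $\sup_n\|eA_n(x)e\|_\ii<\ii$; by the reduction to the positive cone used in the proof of Theorem \ref{tau1} it is enough to treat $x\in L^p_+$. I would deduce this from a noncommutative maximal ergodic inequality: for $x\in L^p_+$ there is a dominating $a\in L^p_+$ with $A_n(x)\leq a$ for all $n$ and $\|a\|_p\leq C_p\|x\|_p$; taking $e=\{a\leq\lb\}$ gives $\|eA_n(x)e\|_\ii\leq\|eae\|_\ii\leq\lb$ while $\tau(e^\pp)=\tau\{a>\lb\}\leq\lb^{-p}\|a\|_p^p$, which is $\leq\ep$ for $\lb$ large. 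Proving this maximal inequality in the semifinite setting — via the interpolation machinery of \cite{jx} resting on Yeadon's $L^1$ inequality behind Theorem \ref{t12} — is the genuine obstacle. Granting it, Proposition \ref{pau} shows the a.u.-convergence set is closed, and as it contains a dense subspace it is all of $L^p$; this in fact yields a.u. convergence for every $1<p<\ii$, the restriction $p\ge2$ in the statement merely reflecting the weaker b.a.u. Banach principle originally used in \cite{jx}.
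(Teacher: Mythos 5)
Your proposal is correct, but it takes a genuinely different route from the paper. Note first that the paper offers no proof of Theorem \ref{t121} at all: it is quoted from \cite{jx}, and what the paper proves itself is the stronger Theorem \ref{t43} (a.u.\ convergence for all $1\leq p<\ii$). Compared with that proof, your soft part differs only cosmetically: the paper takes $L^p\cap L^2$ as the dense set, importing a.u.\ convergence from the $L^2$ theory of \cite[Proof of Theorem 1.5]{cl}, whereas you take $F+(I-T)(L^1\cap\mc M)$ and run the mean ergodic decomposition and the telescoping identity $A_n((I-T)z)=\frac1n(z-T^nz)$, $\|z-T^nz\|_\ii\leq 2\|z\|_\ii$, directly in the reflexive space $L^p$ --- which is in substance the same argument that underlies the $L^2$ result you would otherwise cite, so this step is sound. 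The real divergence is the hard ingredient. You invoke the Junge--Xu dominant-element maximal inequality ($A_n(x)\leq a$ with $\|a\|_p\leq C_p\|x\|_p$ for $x\in L^p_+$), then extract the projection $e=\{a\leq\lb\}$; that is legitimate and verifies the hypothesis of Theorem \ref{tau1}, but it rests on the deepest result in this circle of ideas. The paper instead obtains bilateral uniform equicontinuity in measure at zero of $\{A_n\}$ in $L^p$ from \cite[Proposition 4.2]{li}, which is derived elementarily from Yeadon's weak type $(1,1)$ inequality (Theorem \ref{tye}) by splitting a positive $x$ along its spectral family as $x=\int_0^\lb s\,de_s+\int_\lb^\ii s\,de_s$, the first summand being bounded by $\lb$ in $\|\cdot\|_\ii$ and the second lying in $L^1$ with $\|\cdot\|_1\leq\lb^{1-p}\|x\|_p^p$; Proposition \ref{pau} then closes the convergence set exactly as in your scheme. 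So your route buys a short, self-contained proof modulo \cite{jx}, while the paper's route buys independence from the interpolation machinery of \cite{jx}, reducing everything to Yeadon's $L^1$ inequality, in keeping with the program of \cite{li,cl,li1} that this paper follows. Your closing observation is also accurate: the restriction $p\geq2$ in the statement is an artifact of the b.a.u.\ methods of \cite{jx}, and your argument, like the paper's Theorem \ref{t43}, yields a.u.\ convergence for all $1<p<\ii$, with the limit identified in $L^p$ either via the mean ergodic theorem (your way) or via closedness of the $L^p$ unit ball in measure (the paper's way).
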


It was stated in \cite[Theorem 1.3]{li1} that, in fact, we have a.u. convergence in Theorems \ref{t12} and \ref{t121}:
\begin{teo}\label{t43}
Let $T\in DS^+$ and $x\in L^p$, $1\leq p<\ii$. Then the averages (\ref{e13}) converge a.u. to some $\wh x\in L^p$.
\end{teo}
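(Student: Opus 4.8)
The plan is to establish a.u. convergence of the Cesàro averages $A_n(x)$ for every $x\in L^p$, $1\le p<\ii$, by combining the known b.a.u. convergence (Theorems \ref{t12} and \ref{t121}) with the new Banach principle for a.u. convergence (Theorem \ref{tau1}). The strategy splits into two steps: first verify a.u. convergence on a dense subset of $L^p$ where it is easy to see directly, and then propagate it to all of $L^p$ using the fact that the set of a.u. convergence is closed.

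First I would identify a convenient dense subset. A natural candidate is $L^p\cap\mc M$, or better $(L^1\cap\mc M)_+$ together with its linear span, which is dense in $L^p$ for $1\le p<\ii$. On this subset one expects a.u. convergence to follow more readily: for $x\in\mc M$ the averages $A_n(x)$ are uniformly bounded in $\mc M$ (since $\|T\|_{\mc M\to\mc M}\le 1$ gives $\|A_n(x)\|_\ii\le\|x\|_\ii$), and for $x\in L^1\cap\mc M$ one has the b.a.u.\ convergence from Theorem \ref{t12} at the $L^1$ level plus the uniform $\mc M$-bound, which together upgrade b.a.u.\ to a.u. Concretely, one can use a standard argument: b.a.u.\ convergence of a sequence that is also bounded in $\mc M$ yields a.u.\ convergence, because the compression $e(x-A_n(x))e\to 0$ uniformly combined with the two-sided control afforded by the $\mc M$-bound allows one to pass from the bilateral estimate $\|e(x-A_n(x))e\|_\ii$ to the one-sided estimate $\|(x-A_n(x))e\|_\ii$ after a further small reduction of the projection $e$.

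Next I would invoke Theorem \ref{tau1}. The maps $M_n=A_n(T):L^p\to L^0$ are positive, linear, and continuous (indeed $\|A_n\|_{L^p\to L^p}\le 1$), so continuity into $(L^0,t_\tau)$ is immediate. The hypothesis of Theorem \ref{tau1} requires that for each $x\in L^p$ and each $\ep>0$ there is a projection $e$ with $\tau(e^\pp)\le\ep$ and $\sup_n\|eA_n(x)e\|_\ii<\ii$; this is exactly a maximal-type bound, which follows from the b.a.u.\ convergence already guaranteed by Theorems \ref{t12} and \ref{t121} (a b.a.u.\ convergent sequence is, after compressing by a suitable $e$, uniformly bounded in $\mc M$). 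Granting this, Theorem \ref{tau1} tells us that the set $\mc C=\{x\in L^p:\{A_n(x)\}\text{ converges a.u.}\}$ is closed in $L^p$. Since $\mc C$ contains the dense subset $L^p\cap\mc M$ identified in the first step, we conclude $\mc C=L^p$, and the a.u.\ limit must coincide with the b.a.u.\ limit $\wh x\in L^p$ already furnished by the earlier theorems.

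The main obstacle I anticipate is the first step: verifying the maximal bound $\sup_n\|eA_n(x)e\|_\ii<\ii$ and the a.u.\ convergence on the dense subset in a way that is genuinely one-sided rather than bilateral. The delicate point is that Yeadon's maximal inequality and the Junge–Xu results are naturally phrased for b.a.u.\ convergence, and passing to a.u.\ convergence requires the extra leverage coming from the $\mc M$-boundedness on the dense set. One must be careful that the reduction from $\|e\,\cdot\,e\|_\ii$ to $\|\,\cdot\,e\|_\ii$ genuinely works: this typically uses that for a uniformly $\mc M$-bounded sequence, controlling the two-sided compression on a projection $e$ forces control of the one-sided action on a slightly smaller projection, an argument analogous to the one carried out in Example \ref{e31} via the conditional-expectation estimate. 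Once this upgrade is secured on the dense subalgebra, the closedness from Theorem \ref{tau1} does the rest without further difficulty.
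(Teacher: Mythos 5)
Your overall skeleton --- prove a.u.\ convergence on a dense subset, then propagate by closedness of the a.u.\ convergence set --- is the same as the paper's, and your verification of the hypothesis of Theorem \ref{tau1} (or equivalently, b.u.e.m.\ at zero plus Proposition \ref{pau}) is fine: the maximal-type bound $\sup_n\|eA_n(x)e\|_\ii<\ii$ does follow from the known b.a.u.\ convergence, after shrinking $e$ so that $\wh xe$ is also bounded. The identification of the a.u.\ limit with the b.a.u.\ limit in $L^p$ is likewise unproblematic (the paper gets $\wh x\in L^p$ by noting each $A_n$ is an $L^p$-contraction and the unit ball of $L^p$ is closed in measure).

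The genuine gap is in your first step, and it is fatal as stated. You claim that b.a.u.\ convergence of a sequence that is uniformly bounded in $\mc M$ upgrades to a.u.\ convergence, the bilateral estimate $\|e(x-A_n(x))e\|_\ii$ yielding the one-sided estimate $\|(x-A_n(x))e\|_\ii$ after shrinking $e$. This principle is false, and the paper's own Example \ref{e31} is precisely a counterexample: the partial isometries $v_n^*$ there satisfy $\|v_n^*\|_\ii\leq 1$, converge to $0$ b.a.u., yet do not converge a.u. So uniform $\mc M$-boundedness gives no leverage of the kind you describe; you invoke the conditional-expectation computation of Example \ref{e31} in support of your upgrade, when in fact that example refutes it. (The upgrade would work for a \emph{positive} uniformly bounded sequence tending to $0$, since then $\|y_ne\|_\ii^2\leq\|y_n\|_\ii\,\|ey_ne\|_\ii$ via $\|y_n^{1/2}e\|_\ii^2=\|ey_ne\|_\ii$, but the differences $A_n(x)-\wh x$ are at best self-adjoint, never positive, so this does not apply.) The paper avoids the issue by choosing a different dense subset: $L^p\cap L^2$, where a.u.\ convergence is known directly from the $L^2$ theory (see the proof of Theorem 1.5 in \cite{cl}; for $p\geq 2$ it is also in \cite{jx}) --- there the one-sided control comes from genuinely Hilbert-space arguments (mean ergodic decomposition with telescoping $y-Ty$ terms and square-summability estimates), not from any bilateral-to-one-sided reduction. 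Replacing your dense set $L^p\cap\mc M$ by $L^p\cap L^2$ and citing the $L^2$ result repairs the proof and recovers the paper's argument.
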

\begin{proof}
It is well-known (see, for example, \cite[Proof of Theorem 1.5]{cl}) that the sequence $\{A_n(x)\}$ converges a.u. 
whenever $x\in L^2$. Therefore, since the set $L^p\cap L^2$ is dense in $L^p$ and, by \cite[Proposition 4.2]{li}, the sequence $M_n=A_n: X=L^p\to L^0$ is b.u.e.m. at zero in $L^p$, Proposition \ref{pau} guarantees that the averages $A_n(x)$ converge a.u. for each $x\in L^p$ (to some $\widehat x\in L^0$), hence we also have $A_n(x)\to \widehat x$ in measure. 
As each $A_n$ is a contraction in $L^p$ the unit ball of which is closed in measure topology, we conclude that 
$\widehat x\in L^p$.
\end{proof}

Here is an extension of Theorem \ref{t43} to $\mc R_\tau$:
\begin{teo}\label{t13}
Let $T\in DS^+$ and $x\in \mc R_\tau$. Then the averages (\ref{e13}) converge a.u. to some $\wh x\in  \mc R_\tau$.
\end{teo}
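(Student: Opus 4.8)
The plan is to deduce the theorem from the Banach principle of Proposition~\ref{pau}. Recall that $L^1\su\mc R_\tau$ and that, by Proposition~\ref{p11}, the subspace $L^1\cap\mc M$ — and hence $L^1$ itself — is dense in $(\mc R_\tau,\|\cdot\|_{L^1+\mc M})$. Moreover, Theorem~\ref{t43} with $p=1$ already yields that $\{A_n(x)\}$ converges a.u. for every $x\in L^1$, so $L^1$ is a dense subset of $\mc R_\tau$ contained in the set $\mc L$ of a.u.-convergence. Consequently, if I can show that the sequence $M_n=A_n\colon\mc R_\tau\to L^0$ (well defined since $T(x)\prec\prec x$ keeps $\mc R_\tau$ invariant) is b.u.e.m. at zero in $\mc R_\tau$, then Proposition~\ref{pau} will force $\mc L$ to be closed in $\mc R_\tau$, whence $\mc L=\mc R_\tau$.

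The crux of the argument, and the step I expect to be the main obstacle, is establishing b.u.e.m. at zero with respect to the weaker norm $\|\cdot\|_{L^1+\mc M}=\int_0^1\mu_t(\cdot)\,dt$, since smallness of this norm does not control the full $L^1$-norm. By \cite[Lemma 4.1]{li} it suffices to verify b.u.e.m. on the positive cone $(\mc R_\tau)_+$. Given $x\in(\mc R_\tau)_+$, the idea is to split it by functional calculus at the level $\lb=\mu_1(x)$: put $y=(x-\lb\mb 1)_+$ and $z=x-y$, so that $x=y+z$ with $0\le y\le x$ and $z\ge0$. Because $\mu_t(\cdot)$ is non-increasing one gets $\|z\|_\ii\le\lb\le\int_0^1\mu_t(x)\,dt=\|x\|_{L^1+\mc M}$, while $\mu_t(y)=(\mu_t(x)-\lb)_+$ gives $\|y\|_1=\int_0^1(\mu_t(x)-\lb)\,dt\le\|x\|_{L^1+\mc M}$. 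Thus, if $\|x\|_{L^1+\mc M}<\gm$, then both $\|z\|_\ii$ and the $L^1$-norm of the integrable part $y$ are smaller than $\gm$.

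Now fix $\ep>0$ and $\dt>0$. Since $T\in DS$ is an $L^\ii$-contraction, $\|A_n(z)\|_\ii\le\|z\|_\ii<\gm$ for every $n$, so the bounded part contributes at most $\gm$ uniformly in $n$. For the integrable part I would invoke the b.u.e.m. of $\{A_n\}$ at zero in $L^1$, which underlies Theorem~\ref{t12} (see \cite[Proposition 4.2]{li} with $p=1$): choosing $\gm$ no larger than the threshold that this provides for the pair $(\ep,\dt/2)$, there is $e\in\mc P(\mc M)$ with $\tau(e^\pp)\le\ep$ and $\sup_n\|eA_n(y)e\|_\ii\le\dt/2$. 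Taking in addition $\gm\le\dt/2$ gives $\sup_n\|eA_n(x)e\|_\ii\le\sup_n\|eA_n(y)e\|_\ii+\sup_n\|A_n(z)\|_\ii\le\dt$, which is exactly b.u.e.m. at zero in $(\mc R_\tau)_+$.

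With b.u.e.m. in hand, Proposition~\ref{pau} yields $\mc L=\mc R_\tau$, so $A_n(x)\to\wh x$ a.u. for some $\wh x\in L^0$, in particular in measure. It remains to check $\wh x\in\mc R_\tau$. For $x\in L^1$ this is clear, since then $\wh x\in L^1\su\mc R_\tau$ by Theorem~\ref{t43}. For general $x\in\mc R_\tau$, choose $x^{(j)}\in L^1$ with $\|x-x^{(j)}\|_{L^1+\mc M}\to0$; then $A_n(x-x^{(j)})\to\wh x-\wh{x^{(j)}}$ a.u., hence in measure, and the Fatou property of $L^1+\mc M$ together with the contractivity of each $A_n$ on the fully symmetric space $\mc R_\tau$ gives $\|\wh x-\wh{x^{(j)}}\|_{L^1+\mc M}\le\liminf_n\|A_n(x-x^{(j)})\|_{L^1+\mc M}\le\|x-x^{(j)}\|_{L^1+\mc M}\to0$. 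Since each $\wh{x^{(j)}}\in\mc R_\tau$ and $\mc R_\tau$ is closed in $L^1+\mc M$, I conclude $\wh x\in\mc R_\tau$, which completes the proof.
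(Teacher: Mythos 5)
Your proof is correct, but it takes a genuinely different route from the paper's. The paper argues directly: for $x\ge 0$ it truncates at the spectral levels $1/m$, writing $x=x_m+y_m$ with $x_m\in L^1$ and $0\le y_m\le\frac1m\,\mb 1$, applies Theorem \ref{t43} to each $x_m$, and assembles one projection along which $\{A_n(x)\}$ is a.u.\ Cauchy — Proposition \ref{pau} is never invoked for this theorem. You instead establish a quantitative statement the paper does not state: $\{A_n\}$ is b.u.e.m.\ at zero on $(\mc R_\tau,\|\cdot\|_{L^1+\mc M})$, via the cut at $\lambda=\mu_1(x)$, which correctly yields $\|y\|_1\le\|x\|_{L^1+\mc M}$ and $\|z\|_\ii\le\lambda\le\|x\|_{L^1+\mc M}$, and then you feed this into Proposition \ref{pau} with the dense subspace $L^1$ (dense in $\mc R_\tau$ by Proposition \ref{p11} — this is exactly where the hypothesis $\mu_t(x)\to 0$ enters, since your equicontinuity estimate actually holds on all of $(L^1+\mc M)_+$, where density of $L^1$ fails). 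Two remarks: first, \cite[Lemma 4.1]{li}, which you use to pass from $(\mc R_\tau)_+$ to $\mc R_\tau$, is formulated there for $L^p$-type spaces, but its proof needs only the decomposition $x=x_1-x_2+i(x_3-x_4)$ with $x_j\ge 0$ and $\|x_j\|\le\|x\|$, which $\mc R_\tau$ enjoys as a symmetric space, so the citation stretch is harmless; second, the concluding steps genuinely differ, and here the paper's version is slightly stronger: it shows $\mu_t(\wh x)\prec\prec\mu_t(x)$ via a.e.\ convergence of singular value functions and Fatou's lemma, then uses full symmetry of $\mc R_\tau$ — a byproduct that is explicitly reused in the proof of Theorem \ref{t46} — whereas your approximation argument (contractivity of $A_n$ on $L^1+\mc M$, measure-closedness of its balls, and closedness of $\mc R_\tau$) establishes $\wh x\in\mc R_\tau$ but not the dominance $\wh x\prec\prec x$, so if you wanted Theorem \ref{t46} downstream you would still need the paper's final step. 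What your route buys in exchange is the explicit maximal-type estimate at zero in the weak norm $\|\cdot\|_{L^1+\mc M}$, of independent interest, and it turns the passage from $L^1$ to $\mc R_\tau$ into a clean instance of the general Banach principle rather than an ad hoc Cauchy construction.
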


\begin{proof}
Without loss of generality assume that $x\geq 0$, and let $\{ e_\lb\}_{\lb\ge 0}$ be the spectral family of $x$. Given $m\in \mbb N$, denote $x_m= \int_{1/m}^\ii\lb\,de_\lb$ and $y_m = \int_0^{1/m}\lb\,de_\lb$. Then $0 \leq y_m \leq \frac1m\,\mb 1$, $x_m \in L^1$, and $x = x_m+y_m$ for all $m$.

Fix $\ep > 0$. By Theorem \ref{t43}, $A_n(x_m) \to \wh x_m \in L^1$ a.u. for each $m$.
Therefore there exists $e_m\in \mc P(\mc M)$ such that
\[
\tau(e_m^{\pp})\leq\frac\ep{2^m}\text{ \ \ and \ \ }
\| (A_n(x_m)-\wh x_m)e_m\|_\ii\to 0
\]
as $n \to \ii$.
Then it follows that
\[
\|(A_k(x_m)-A_l(x_m))e_m\|_\ii< \frac1m \text{ \ \ for all \ }  k, l \geq N(m).
\]
Since $\|y_m\|_\ii\leq 1/m$, we have
\[
\begin{split}
\| (A_k(x)-A_l(x))e_m\|_\ii
&\leq \| (A_k(x_m)-A_l(x_m))e_m\|_\ii+\| (A_k(y_m)-A_l(y_m))e_m\|_\ii\\
&<\frac1m+\|A_k(y_m)e_m\|_\ii+\| A_l(y_m)e_m\|_\ii\leq\frac3m
\end{split}
\]
for each $m$ and all $k, l \geq N(m)$. So, if $e=\bigwedge\limits_m e_m$, then
\[
\tau(e^\pp)\leq\ep\text{ \  \ and \ \ } \|(A_k(x)-A_l(x))e\|_\ii<\frac3m\text{ \ \ for all \ }k, l\ge N(m).
\]
This means that $\{A_n(x)\}$ is a Cauchy sequence with respect to a.u. convergence.
Since $L^0$ is complete with respect to a.u. convergence \cite[Remark 2.4]{cls}, we conclude that the sequence
$\{A_n(x)\}$ converges a.u. to some $\wh x \in L^0$.

As $A_n(x)\to \wh x$ a.u., it is clear that $A_n(x)\to \wh x$ in measure. Since  the unit ball in $L^1+\mc M$ is closed in measure topology \cite[Theorem 4.1]{ddst}, it follows that $\wh x\in L^1+\mc M$. In addition, measure convergence $A_n(x)\to\wh x$ implies that $\mu_t(A_n(x))\to \mu_t(\wh x)$ a.e. on $((0,\ii),\mu)$ (this can be shown as in the commutative case; see, for example, \cite[Ch.\,II, \S\,2, Property 11$^\circ$]{kps}).

Since  $T\in DS^+$, we have $A_n(T) \in DS^+$,  hence $\mu_t(A_n(x))\prec\prec \mu_t( x)$, that is,
\[
\int \limits_0^s\mu_t(A_n(x))dt\leq  \int \limits_0^s\mu_t(x)dt \text{ \ for all \ }s>0, \,n\in\mbb N.
\]
Since $\mu_t(A_n(x))\to\mu_t(\wh x)$ a.e. on $((0,s),\mu)$, Fatou lemma implies that
\[
\int\limits_0^s\mu_t(\wh x)dt\leq  \int \limits_0^s\mu_t(x)dt \text{ \ for all\ }s>0,
\]
that is, $\mu_t(\wh x)\prec\prec \mu_t( x)$. Since $\mc R_\tau$ is a fully symmetric space and $x\in \mc R_\tau$, it follows that $\wh x\in\mc R_\tau$.
\end{proof}

Next we give an application of Theorem \ref{t13} to fully symmetric spaces:

\begin{teo}\label{t46}
Let $\tau(\mb 1)=\ii$, and let $E=E(\mc M,\tau)$ be a fully symmetric space such that $\mb 1\notin E$. If $T \in DS^+$, then for every $x\in E$ the averages (\ref{e13}) converge a.u. to some $\wh x\in E$.
\end{teo}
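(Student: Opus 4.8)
The plan is to reduce the statement to Theorem \ref{t13} together with Proposition \ref{p12}. First I would note that since $\tau(\mb 1)=\ii$ and $\mb 1\notin E$, Proposition \ref{p12} gives the inclusion $E\su\mc R_\tau$. Hence any $x\in E$ automatically lies in $\mc R_\tau$, and Theorem \ref{t13} applies verbatim: the averages $A_n(x)$ converge a.u. to some $\wh x\in\mc R_\tau$. In particular, the a.u. convergence itself needs no fresh argument; the entire remaining task is to sharpen the conclusion $\wh x\in\mc R_\tau$ to $\wh x\in E$.

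To accomplish this I would recover, inside the present proof, the submajorization that is produced along the way in the proof of Theorem \ref{t13}. Since $A_n(x)\to\wh x$ a.u., we have $A_n(x)\to\wh x$ in measure, and therefore $\mu_t(A_n(x))\to\mu_t(\wh x)$ a.e. on $((0,\ii),\mu)$. Because $T\in DS^+$ forces $A_n(T)\in DS^+$, we have $A_n(x)\prec\prec x$ for every $n$, i.e. $\int_0^s\mu_t(A_n(x))\,dt\leq\int_0^s\mu_t(x)\,dt$ for all $s>0$. Passing to the limit via Fatou's lemma yields $\int_0^s\mu_t(\wh x)\,dt\leq\int_0^s\mu_t(x)\,dt$ for all $s>0$, that is, $\wh x\prec\prec x$.

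Finally, since $E$ is \emph{fully} symmetric and $x\in E$, the defining property of full symmetry applied to $\wh x\prec\prec x$ yields $\wh x\in E$ (and in fact $\|\wh x\|_E\leq\|x\|_E$), completing the proof.

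I do not expect a genuine obstacle here: the result is essentially a corollary of Theorem \ref{t13}. The one point requiring care is a bookkeeping matter, namely ensuring that the limit $\wh x$ delivered by Theorem \ref{t13} is exactly the element whose rearrangements $\mu_t(\wh x)$ arise as the a.e. limit of $\mu_t(A_n(x))$, so that Fatou's lemma legitimately transfers the submajorization $A_n(x)\prec\prec x$ to $\wh x\prec\prec x$. Everything else—the a.u. convergence and the inclusion $E\su\mc R_\tau$—is supplied directly by the already-established results.
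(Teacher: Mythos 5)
Your proposal is correct and follows essentially the same route as the paper: Proposition \ref{p12} gives $E\su\mc R_\tau$, Theorem \ref{t13} supplies the a.u.\ convergence to $\wh x\in\mc R_\tau$, and the submajorization $\wh x\prec\prec x$ (which the paper simply cites from the proof of Theorem \ref{t13}, while you rederive it via Fatou's lemma) combines with full symmetry of $E$ to place $\wh x$ in $E$. No gaps.
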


\begin{proof}
By Proposition \ref{p12}, $E \su  \mc R_\tau$. Then it follows from Theorem  \ref{t13} that the averages
$A_n(x)$ converge a.u. to some $\wh x \in \mc R_\tau$.  Since $\mu_t(\wh x)\prec\prec \mu_t( x)$
(see the proof of Theorem \ref{t13}) and $E$ is a fully symmetric space, we obtain $\wh x\in E$.
\end{proof}

Now, let $\Bbb C_1=\{z\in \Bbb C: |z|=1\}$ be the unit circle in $\mbb C$. A function $P : \mbb Z\to\mbb C$
is said to be a {\it trigonometric polynomial} if $P(k)=\sum_{j=1}^{s} z_j\lb_j^k$, $k\in \mbb Z$, for some $s\in \mbb N$, $\{ z_j \}_1^s \subset \mbb C$, and $\{\lb_j \}_1^s \subset \mbb C_1$.
A sequence $\{ \beta_k \}_{k=0}^{\ii} \subset \Bbb C$ is called a {\it bounded Besicovitch sequence} if

(i) $| \beta_k | \leq C < \ii$ for all $k$;

(ii) for every $\ep >0$ there exists a trigonometric polynomial $P$ such that
\[
\limsup_n \frac 1n \sum_{k=0}^{n-1} | \beta_k - P(k) | < \ep.
\]

The following theorem was proved in \cite{cls}.

\begin{teo}\label{t14}
Assume that $\mc M$ has a separable predual. Let $T\in DS^+$, and let $\{ \bt_k\}$ be a bounded Besicovitch sequence.
Then for every $x\in L^1(\mc M)$ the averages
\begin{equation}\label{e14}
B_n(x)=\frac 1n\sum_{k=0}^{n-1}\bt_kT^k(x)
\end{equation}
converge b.a.u. to some $\wh x\in L^1(\mc M)$.
\end{teo}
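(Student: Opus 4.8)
The plan is to combine a maximal (Banach--principle) argument with convergence on a dense subspace, exploiting the defining two-part structure of a bounded Besicovitch sequence: a trigonometric-polynomial main term and a remainder that is small in Ces\`aro average. Writing $B_n^\bt$ for the averages (\ref{e14}) attached to a weight $\{\bt_k\}$ with $|\bt_k|\leq C$, I would first reduce the required maximal estimate to positive operators (by linearity and the decomposition of any $x\in L^1$ into four elements of $L^1_+$), and then establish convergence on the dense subspace $L^1\cap\mc M$.

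\textbf{Step 1 (closedness via a maximal inequality).} I would show that $\{B_n^\bt\}_n$ is bilaterally uniformly equicontinuous in measure at zero in $L^1$, uniformly for weights bounded by $C$. Splitting $\bt_k$ into four parts $0\leq\bt_k^{(i)}\leq C$ and $x\in L^1_+$, one has, since $T\in DS^+$ gives $T^k(x)\ge 0$, the operator inequality $\frac1n\sum_{k=0}^{n-1}\bt_k^{(i)}T^k(x)\leq C\,A_n(x)$, so that $\|e B_n^\bt(x)e\|_\ii$ is controlled by $C\|eA_n(x)e\|_\ii$. Yeadon's maximal inequality underlying Theorem \ref{t12} then produces, for given $\ep,\dt$, a projection $e$ with $\tau(e^\pp)\leq\ep$ and $\sup_n\|eB_n^\bt(x)e\|_\ii\leq\dt$ once $\|x\|_1$ is small (intersecting the projections obtained for the four positive parts). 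By the bilaterally almost uniform analogue of the Banach principle (established along the lines of Proposition \ref{pau}; cf. \cite{li}), the set $\{x\in L^1:\{B_n^\bt(x)\}\text{ converges b.a.u.}\}$ is closed in $L^1$.

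\textbf{Step 2 (dense convergence).} On the dense subspace $L^1\cap\mc M\su L^2$ I would treat the two parts separately. For a single frequency $\bt_k=\lb^k$, $\lb\in\Bbb C_1$, note $B_n^\bt(x)=\frac1n\sum_{k=0}^{n-1}(\lb T)^k(x)$, where $\lb T\in DS$ is an $L^2$-contraction; the mean ergodic theorem together with a maximal inequality on $L^2$ yields b.a.u. convergence on $L^2$, hence on $L^1\cap\mc M$, and by linearity this extends to every trigonometric-polynomial weight $P$. For the remainder $\gm_k=\bt_k-P(k)$, since $\|T^k\|_{\mc M\to\mc M}\leq1$, for $x\in L^1\cap\mc M$ one has the bound $\big\|\frac1n\sum_{k=0}^{n-1}\gm_kT^k(x)\big\|_\ii\leq\|x\|_\ii\cdot\frac1n\sum_{k=0}^{n-1}|\gm_k|$, whose $\limsup$ is at most $\ep\|x\|_\ii$. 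Choosing $P$ so that the Ces\`aro average of $|\gm_k|$ is small and using b.a.u. convergence of $B_n^P(x)$, a routine $\ep$-argument shows $\{B_n^\bt(x)\}$ is b.a.u. Cauchy, hence b.a.u. convergent, for every $x\in L^1\cap\mc M$. Combining with Step 1 gives b.a.u. convergence for all $x\in L^1$. Finally, since $\|B_n^\bt(x)\|_1\leq C\|x\|_1$ and b.a.u. convergence implies convergence in measure, the Fatou property of $L^1$ (closedness of balls in the measure topology) forces the limit $\wh x$ into $L^1$.

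\textbf{Main obstacle.} The genuinely ergodic-theoretic content lies in the single-frequency case of Step 2: b.a.u. convergence of the Ces\`aro averages of the \emph{non-positive}, complex Dunford--Schwartz operator $\lb T$. Positivity is lost, so Yeadon's theorem (Theorems \ref{t12}, \ref{t43}) does not apply directly, and the argument must route through the $L^2$ mean ergodic theorem together with a maximal inequality valid for non-positive $DS$ operators; it is here that the separability of the predual is used, e.g. to treat the continuum of frequencies via a countable dense set and a diagonal choice of exceptional projections. Making this modulated $L^2$-theory mesh with the uniform maximal inequality of Step 1 is the crux of the proof.
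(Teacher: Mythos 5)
Your proposal is correct in outline, and its skeleton coincides with that of the argument this theorem actually rests on; note that the paper itself does not prove Theorem \ref{t14} but cites \cite{cls}, and the architecture is visible in the paper's proof of Theorem \ref{t15}: convergence for trigonometric-polynomial weights (\cite[Lemma 4.2]{cls}), the remainder/approximation argument on the dense set $L^1\cap\mc M$ (\cite[Theorem 4.4]{cls}), the b.u.e.m.-at-zero property of $\{B_n\}$ in $L^1$ (\cite[Theorem 2.1]{cl2}, which is exactly your four-way splitting of the weights into parts in $[0,C]$, domination of each piece by $C\,A_n$ on $L^1_+$, and Yeadon's inequality, Theorem \ref{tye}), then a Banach principle and density. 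Where you genuinely diverge is the single-frequency step. In \cite{cls}, the convergence of $\frac1n\sum_{k=0}^{n-1}\lb^kT^k(x)$ is proved for \emph{all} $x\in L^1$ by reducing to the unweighted $L^1$ ergodic theorem applied in an enlarged system (the paper signals this by saying one substitutes Theorem \ref{t43} for \cite[Theorem 1.4]{cls} inside the proof of \cite[Lemma 4.2]{cls}), and it is in that $L^1$ argument that the separable-predual hypothesis is consumed. You instead work on $L^2$: for fixed $\lb\in\Bbb C_1$, $\lb T$ is an $L^2$-contraction, $L^2=\operatorname{Fix}(\lb T)\oplus\ol{(I-\lb T)L^2}$, telescoping gives the $O(1/n)$ rate on the range, and the standard lemma that $\sum_n\|y_n\|_2^2<\ii$ forces $y_n\to0$ a.u., combined with your $L^2$ maximal estimate, yields b.a.u. convergence on $L^2\supset L^1\cap\mc M$. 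This route is more elementary, needs the weighted maximal estimate only for positive $T$ (which you already have), and --- if written out --- does not use separability of the predual at all, so it would in fact prove the theorem without that hypothesis.

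Two corrections to your commentary. First, ``the mean ergodic theorem together with a maximal inequality on $L^2$'' is, as stated, not enough: norm convergence plus a maximal inequality never yields pointwise-type convergence by itself; you must make explicit the dense subspace $\operatorname{Fix}(\lb T)+(I-\lb T)L^2$, the telescoping identity $\frac1n\sum_{k=0}^{n-1}(\lb T)^k(I-\lb T)z=\frac1n\big(z-(\lb T)^nz\big)$, and the $\ell^2$-lemma just mentioned. Second, your ``main obstacle'' paragraph mislocates both difficulties: no maximal inequality for non-positive $DS$ operators is needed anywhere (the splitting of unimodular weights from your own Step 1 dominates the modulated averages by $CA_n$ with $T$ positive, in $L^2$ just as in $L^1$), and separability has nothing to do with ``a continuum of frequencies'' --- for a fixed Besicovitch sequence only countably many polynomials $P_j$, hence countably many frequencies, ever enter, and the exceptional projections are intersected over that countable family exactly as in your remainder argument. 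In \cite{cls} separability serves the single-frequency lemma on $L^1$; your $L^2$ detour bypasses it entirely.
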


Below is an extension of Theorem \ref{t14} to the fully symmetric space $\mc R_\mu$.

\begin{teo}\label{t15}
Let $\mc M$, $T$, and $\{ \bt_k\}$ be as in Theorem \ref{t14}.
Then for every $x\in \mc R_\tau$ the averages (\ref{e14})
converge a.u. to some $\wh x\in \mc R_\tau$.
\end{teo}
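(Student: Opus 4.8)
The plan is to follow the two-stage scheme of Theorems \ref{t43} and \ref{t13}: first upgrade the b.a.u. convergence of Theorem \ref{t14} to a.u. convergence on all of $L^1$, and then pass to $\mc R_\tau$ by the spectral truncation argument used for Theorem \ref{t13}. Throughout, set $C=\sup_k|\bt_k|<\ii$.

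In the first stage I would establish a.u. convergence of $\{B_n(x)\}$ for every $x\in L^1$. Here $\{B_n\}$ is a sequence of linear maps from $L^1$ into $L^0$ with $\|B_n(x)\|_1\leq C\|x\|_1$, and the maximal ergodic inequality underlying Theorem \ref{t14} (established in \cite{cls}) should yield that $\{B_n\}$ is b.u.e.m. at zero in $L^1$. A.u. convergence then needs to be verified only on a dense subset, for which I would take $L^1\cap L^2$: using the trigonometric-polynomial approximation built into the Besicovitch condition, one reduces to the modulated Ces\'aro averages $\frac1n\sum_{k=0}^{n-1}(\lb T)^k$ with $|\lb|=1$, which are Ces\'aro averages of the $L^2$-contraction $\lb T$, and a.u. convergence of these on $L^2$ follows from the Hilbert-space mean ergodic theorem together with the $L^2$ maximal inequality (as for the case $x\in L^2$ in Theorem \ref{t43}), the approximation error being absorbed by the maximal inequality. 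Proposition \ref{pau} then shows that $\mc L=\{x\in L^1:\{B_n(x)\}\ \text{converges a.u.}\}$ is closed in $L^1$, and since it contains the dense set $L^1\cap L^2$ we get $\mc L=L^1$. As each $B_n$ is bounded by $C$ on $L^1$ and $B_n(x)\to\wh x$ in measure, closedness of balls of $L^1$ in the measure topology gives $\wh x\in L^1$.

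In the second stage I would pass to $\mc R_\tau$. Assuming $x\geq0$ with spectral family $\{e_\lb\}$, set $x_m=\int_{1/m}^\ii\lb\,de_\lb\in L^1$ and $y_m=\int_0^{1/m}\lb\,de_\lb$, so $x=x_m+y_m$ and $\|y_m\|_\ii\leq1/m$. For $x_m$ the first stage gives a.u. convergence of $\{B_n(x_m)\}$, while for the tail $\|B_n(y_m)\|_\ii\leq\frac1n\sum_{k=0}^{n-1}|\bt_k|\,\|T^k(y_m)\|_\ii\leq C\|y_m\|_\ii\leq C/m$ uniformly in $n$; combining these exactly as in the proof of Theorem \ref{t13} shows that $\{B_n(x)\}$ is a.u. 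Cauchy, hence converges a.u. to some $\wh x\in L^0$ (for general $x$ one splits into real and imaginary, positive and negative parts). To see that $\wh x\in\mc R_\tau$, note that $T^k\in DS$ gives $T^k(x)\prec\prec x$, so the Hardy--Littlewood inequality yields $\int_0^s\mu_t(B_n(x))\,dt\leq\frac Cn\sum_{k=0}^{n-1}\int_0^s\mu_t(T^k(x))\,dt\leq C\int_0^s\mu_t(x)\,dt$ for all $s>0$ and $n$, i.e.\ $\tfrac1C B_n(x)\prec\prec x$. Since $B_n(x)\to\wh x$ in measure we have $\mu_t(B_n(x))\to\mu_t(\wh x)$ a.e., and Fatou's lemma then gives $\int_0^s\mu_t(\wh x)\,dt\leq C\int_0^s\mu_t(x)\,dt$ for all $s$; in particular $\wh x\in L^1+\mc M$ and $\tfrac1C\wh x\prec\prec x$. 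As $\mc R_\tau$ is fully symmetric and $x\in\mc R_\tau$, this forces $\wh x\in\mc R_\tau$.

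I expect the main obstacle to be the first stage: the complex Besicovitch weights destroy positivity of the averages, so neither Theorem \ref{t43} nor the maximal inequality for $A_n$ applies directly. The crux is to produce both a maximal inequality (b.u.e.m.\ at zero) for $\{B_n\}$ in $L^1$ and a.u.\ (rather than merely b.a.u.) convergence on a dense subset; the former is supplied by the inequality behind \cite{cls}, while the latter rests on reducing, through the trigonometric approximation, to modulated Ces\'aro averages on the Hilbert space $L^2$. The truncation and domination arguments of the second and third steps are then routine adaptations of the proof of Theorem \ref{t13}, the only change being the harmless constant $C$.
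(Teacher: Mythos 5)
Your overall skeleton is the paper's: bilateral uniform equicontinuity of $\{B_n\}$ at zero in $L^1$ plus Proposition \ref{pau} to reduce a.u.\ convergence to a dense subset, followed by the truncation argument of Theorem \ref{t13} with the observation $C^{-1}B_n\in DS$, hence $\mu_t(C^{-1}B_n(x))\prec\prec\mu_t(x)$; your second and third steps agree with the paper essentially verbatim and are fine. The gap is in your first stage, at exactly the point you flag as the crux. You take $L^1\cap L^2$ as the dense set and propose to absorb the approximation error $\frac1n\sum_{k=0}^{n-1}(\bt_k-P(k))T^k(x)$ ``by the maximal inequality'' in $L^2$. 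No inequality of the required form is available: the maximal inequalities in this circle of ideas (Theorem \ref{tye}, \cite{jx}, \cite[Theorem 2.1]{cl2}) are bilateral and require positivity of the averaged operators, whereas an a.u.\ Cauchy estimate needs one-sided control $\sup_n\|\cdot\,e\|_\ii$. With complex weights $c_k=\bt_k-P(k)$ you can salvage bilateral control (split $c_k$ into real and imaginary parts and dominate by averages of the positive parts of $x$), but a bilateral bound cannot be inserted into a one-sided Cauchy estimate, and the standard upgrade $\bigl|\sum c_kT^k(x)\bigr|^2\leq\bigl(\sum|c_k|\bigr)\sum|c_k|\,|T^k(x)|^2$ followed by $|T^k(x)|^2\leq T^k(|x|^2)$ needs a Schwarz inequality valid for $2$-positive maps but false for merely positive Dunford--Schwartz operators (the transpose map is a counterexample). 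The same objection hits your polynomial step: $\lb T$ is not positive, so neither Theorem \ref{tye} nor the $p\geq2$ results of \cite{jx} apply to ``the $L^2$ maximal inequality'' for $\lb T$ as you cite them. Proposition \ref{pau} does not remove the problem: it converts \emph{bilateral} equicontinuity into closedness of the a.u.-convergence set, but it still demands a.u.\ convergence of the full weighted averages on some dense set, so the error has to be absorbed one-sidedly somewhere.

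The paper sidesteps both difficulties by taking the dense set inside $\mc M$: for $x\in L^1\cap\mc M$ the error is controlled in the uniform norm with no projection and no maximal inequality at all, since
\[
\Bigl\|\frac1n\sum_{k=0}^{n-1}(\bt_k-P(k))T^k(x)\Bigr\|_\ii\leq\Bigl(\frac1n\sum_{k=0}^{n-1}|\bt_k-P(k)|\Bigr)\|x\|_\ii<\ep\|x\|_\ii
\]
for all large $n$ --- a one-sided bound for free; and the polynomial-modulated averages $\widetilde B_n(x)$ are shown to converge a.u.\ for \emph{all} $x\in L^1$ by rerunning the transference argument of \cite[Lemma 4.2]{cls}, which replaces $\lb T$ by a genuinely positive Dunford--Schwartz operator on an enlarged algebra, with Theorem \ref{t43} in place of the b.a.u.\ statement. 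Note also that the separable-predual hypothesis of Theorem \ref{t14} is consumed by this \cite{cls} machinery, which your sketch silently bypasses --- a further sign the $L^2$ shortcut is not carrying the weight it would need to. If you replace your dense set by $L^1\cap\mc M$ and treat the polynomial part by the transference just described, the remainder of your proposal goes through as written.
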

\begin{proof}
First, using Theorem \ref{t43} instead of \cite[Theorem 1.4]{cls} in the proof of \cite[Lemma 4.2]{cls}, we conclude that
the averages
\[
\widetilde B_n(x)=\frac 1n \sum_{k=0}^{n-1}P(k)T^k(x)
\]
converge a.u. for every trigonometric polynomial $P$ and $x\in L^1$.

Next, it follows exactly as in \cite[Theorem 4.4]{cls} that the sequence $\{ B_n(x)\}$ converges a.u.
for every $x\in L^1\cap \mc M$.

By \cite[Theorem 2.1]{cl2}, the sequence $B_n:L^1\to L^0$ is b.u.e.m. at zero in $L^1$, which, in view of Proposition \ref{pau}, implies that the set
\[
\big\{x\in L^1:\, \{ B_n(x)\} \text{\ \,converges a.u.}\big\}
\]
is closed in $L^1$. Therefore, as $L^1\cap \mc M$ is dense in $L^1$, we conclude that the sequence $\{ B_n(x)\}$
converges a.u. for all $x\in L^1$.

Now, as $\sup_k|\bt_k|\leq C$, we have $C^{-1}B_n\in DS$, hence $\mu_t\left(C^{-1}B_n(x)\right)\prec\prec\mu_t( x)$ for every $x\in\mc R_\tau$. Then, as in the proof of Theorem \ref{t13}, we obtain  that for every $x\in \mc R_\tau$ the sequence $\{ B_n(x)\}$
converges a.u. to some  $\wh x\in \mc R_\tau$.

\end{proof}

\section{Multiparameter individual ergodic theorems in $\mc R_\tau$}

Fix $d\in\mbb N$. If $\mbb N_0=\{ 0,1,2, \dots \}$, let $\mbb N_0^d$ be the additive semigroup of $d$\,-\,dimensional vectors
over $\mbb N_0$.

We will write $\mbb N_0^d\ni \mb n=(n_1,\dots, n_d)\to \ii$ whenever $n_i\to \ii$ for each $1\leq i\leq d$.
Accordingly, if $\mb n(k)=( n_1(k), \dots , n_d(k))$, $k=1,2,\dots$, is a sequence in $\mbb N_0^d$,
then we write $\mb n(k)\to\ii$ if $n_i(k)\to \ii$ for each $1\leq i\leq d$.

Given $\mb n=(n_1,\dots,n_d)\in \mbb N_0^d$, $\{ T_i\}_{i=1}^d\su DS$, and $x\in L^1+\mc M$,  denote
\begin{equation}\label{emulti}
A_{\mb n}(x)=\frac 1{n_1\cdot \dotsc \cdot n_d}\sum_{i_1=0}^{n_1-1}\dotsc \sum_{i_d=0}^{n_d-1}
T_1^{i_1}\cdot \dotsc \cdot T_d^{i_d}(x),
\end{equation}
the corresponding multiparameter ergodic averages.

A multiparameter sequence  $\{ x_{\mb n}\}\su L^0$ is said to converge to $x\in L^0$ {\it almost uniformly} ({\it a.u.})
({\it bilaterally almost uniformly} ({\it b.a.u.})) if for every $\ep>0$ there exists such a projection
$e\in \mc P(\mc M)$ such that $\tau(e^{\pp})\leq \ep$ and
$\| (x-x_{\mb n})e\|_{\ii}\to 0$ (respectively, $\| e(x-x_{\mb n})e\|_{\ii}\to 0$) as $\mb n\to \ii$.

It is known \cite[Theorem 6.6]{jx}  (see also \cite{gg,pe,sk}) that if $1<p<\ii$, then the averages (\ref{emulti})
converge b.a.u. (and a.u. if $p> 2$).  The following theorem asserts that we have a.u. convergence for all $1<p<\ii$.

\begin{teo}\label{tau}
Given $1<p\leq2$, the averages (\ref{emulti}) converge a.u. for every $x\in L^p$.
\end{teo}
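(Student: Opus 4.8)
The plan is to establish a.u.\ convergence by a Banach-principle argument applied to the directed family $\{A_{\mb n}\}_{\mb n\in\mbb N_0^d}$, closely following the proof of Theorem \ref{t43} but with the sequential index replaced by $\mbb N_0^d$ ordered coordinatewise. Thus I would (i) exhibit a dense subspace of $L^p$ on which $\{A_{\mb n}\}$ is already known to converge a.u., (ii) show that $\{A_{\mb n}\}$ is b.u.e.m.\ at zero in $L^p$, and (iii) invoke a multiparameter version of Proposition \ref{pau} to propagate a.u.\ convergence from the dense subspace to all of $L^p$; the limit then lands back in $L^p$ via the ``unit ball closed in measure'' argument used in Theorem \ref{t43}.

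For step (i) I would take the dense subspace $D=L^p\cap\mc M$. If $x\in L^p\cap\mc M$ and $q>2$, then $\mu_t(x)^q\leq\|x\|_\ii^{\,q-p}\mu_t(x)^p$ gives $\|x\|_q^q\leq\|x\|_\ii^{\,q-p}\|x\|_p^p<\ii$, so $L^p\cap\mc M\su L^q$ for every $q>2\ge p$. By the multiparameter theorem cited above, \cite[Theorem 6.6]{jx}, the averages (\ref{emulti}) converge a.u.\ on $L^q$ for each $q>2$; since $A_{\mb n}(x)$ does not depend on the ambient space and a.u.\ convergence is intrinsic to $L^0$, the averages converge a.u.\ for every $x\in D$.

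For step (ii) note first that each $A_{\mb n}$ is a convex combination of the products $T_1^{i_1}\cdot\dotsc\cdot T_d^{i_d}$; since products and convex combinations of Dunford-Schwartz operators again lie in $DS$, we have $A_{\mb n}\in DS$ for every $\mb n$, so each $A_{\mb n}$ is an $L^p$-contraction and $A_{\mb n}(x)\prec\prec x$. The b.u.e.m.\ property of $\{A_{\mb n}\}$ in $L^p$ I would derive from the multiparameter maximal ergodic inequality of \cite{jx} (available for all $1<p<\ii$) by the standard passage from a maximal inequality to uniform equicontinuity in measure at zero, as in the one-parameter arguments of \cite{cl0, li}. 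This is the step that forces the restriction $p>1$, the maximal inequality being unavailable at $p=1$.

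For step (iii) I would record the multiparameter analogue of Proposition \ref{pau}: if $\{A_{\mb n}\}_{\mb n\in\mbb N_0^d}$ is b.u.e.m.\ at zero in a Banach space $X$, then $\{x\in X:\ \{A_{\mb n}(x)\}\text{ converges a.u.}\}$ is closed in $X$. Its proof is a transcription of that of Proposition \ref{pau}, with the thresholds $N_m\in\mbb N$ replaced by thresholds $\mb N_m\in\mbb N_0^d$ and ``$n\ge N_m$'' by ``$\mb n\ge\mb N_m$''; the projection and telescoping estimates are untouched, and a.u.\ Cauchy nets converge by completeness of $L^0$ for a.u.\ convergence \cite[Remark 2.4]{cls}. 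Closedness, together with the density of $D$ and step (i), then yields a.u.\ convergence on all of $L^p$, and the limit belongs to $L^p$ as in Theorem \ref{t43}. I expect the genuine obstacle to be step (ii): controlling the full maximal function $\sup_{\mb n}\|eA_{\mb n}(x)e\|_\ii$ over the whole $d$-parameter family (for possibly non-positive $T_i$), which is exactly where the multiparameter, rather than merely iterated one-parameter, maximal inequality must be invoked; by comparison the net-indexed Banach principle of (iii) is routine and (i) is immediate from the already-known case $q>2$.
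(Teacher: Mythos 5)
Your steps (i) and (ii) are sound and close in spirit to the paper: the paper uses the dense subspace $L^p\cap L^q$, $q>2$ (equivalent to your $L^p\cap\mc M$), and instead of invoking a multiparameter maximal inequality it extracts, for each fixed $x\in L^p$, the pointwise bilateral boundedness $\sup_k\|eA_{\mb n(k)}(x)e\|_\ii<\ii$ directly from the b.a.u.\ convergence statement of \cite[Theorem 6.6]{jx}, and then lets Theorem \ref{tau1} (whose hypotheses are exactly this boundedness, positivity and continuity; the b.u.e.m.\ property is produced internally via \cite[Theorem 3.2]{cl0}) do the work. So your step (ii) is a heavier but legitimate substitute --- noting only that positivity of the $A_{\mb n}$, i.e.\ $T_i\in DS^+$, is needed both for Theorem \ref{tau1} and for \cite[Theorem 6.6]{jx} itself, so your worry about non-positive $T_i$ does not arise in any setting where your cited inputs are available.

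The genuine gap is in step (iii), which you judged routine. The proof of Proposition \ref{pau} does \emph{not} transcribe to $\mbb N_0^d$ with thresholds $\mb N_m$ and the coordinatewise order. Two features of that proof are irreducibly one-dimensional: the projections $f_k=\bigwedge_{n\leq N_{m_{k+1}}}g_{k,n}$ control $M_n(x_k)$ only for the \emph{finitely many} indices $n\leq N_{m_{k+1}}$ (finiteness is what keeps $\tau(f_k^\pp)\leq\ep/2^{k+1}$), and the closing telescoping estimate uses that every index $n\geq N_{m_1}$ lies in some order interval $[N_{m_k},N_{m_{k+1}}]$ between consecutive thresholds. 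For $d\geq2$ this covering fails: with $d=2$ and strictly increasing thresholds, the index $\mb n=(N_{m_1},R)$ with $R$ large satisfies $\mb n\geq\mb N_{m_1}$ but neither $\mb n\geq\mb N_{m_2}$ nor $\mb n\leq\mb N_{m_2}$; at each level $k$ the set of indices with $\mb n\geq\mb N_{m_k}$, $\mb n\not\geq\mb N_{m_{k+1}}$ is \emph{infinite} and is not contained in $\{\mb n\leq\mb N_{m_{k+1}}\}$, so such $M_{\mb n}(x_k)$ are reached neither by the finite meets $f_k$ (the trace bounds would diverge over an infinite meet) nor by the telescoping chain. Hence your net-indexed Banach principle is unproved as stated --- unsurprisingly, since unrestricted multiparameter convergence is delicate even classically (the paper notes it fails in $L^1$ for commuting measure-preserving transformations). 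The repair is exactly the paper's move, which you bypassed: fix an arbitrary sequence $\mb n(k)\to\ii$, so the family $M_k=A_{\mb n(k)}$ is linearly ordered and Theorem \ref{tau1} (equivalently Proposition \ref{pau}) applies verbatim, and then identify the limits obtained from two different sequences by interleaving them into a single sequence tending to $\ii$, as in the last paragraph of the proof of Theorem \ref{tsec}. With that sequential reduction your (i) suffices unchanged, and your (ii) can be weakened to the pointwise boundedness already furnished by b.a.u.\ convergence.
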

\begin{proof}
It is enough to show convergence for any sequence $\mb n(k)\to \ii$ (see the last paragraph in the proof of Theorem \ref{tsec}).

By \cite[Theorem 6.6]{jx}, the sequence $\{ A_{\mb n(k)}(x)\}$ converges b.a.u. for every $x\in L^p$. Therefore,
given $x\in L^p$ and $\ep>0$, there is $e\in \mc P(\mc M)$ such that
\[
\tau(e^\pp)\leq \ep \text{ \ \ and \ \ } \sup_k\|eA_{\mb n(k)}(x)e\|<\ii.
\]
Thus, by Theorem \ref{tau1}, the set
\[
\big\{ x\in L^p: \,\{ A_{\mb n(k)}(x)\} \text{\ \,converges a.u.}\big\}
\]
is closed in $L^p$. This completes the proof since, given any $q> 2$, the set $L^p\cap L^q$ is dense in $L^p$,
and the sequence $\{ A_{\mb n(k)}(x)\}$ converges a.u. whenever $x\in L^p\cap L^q$ by \cite[Theorem 6.6]{jx}.
\end{proof}

Now we turn to multiparameter individual ergodic theorems in $\mc R_\tau$. When $p=1$ the above {\it unrestricted} pointwise convergence of the averages (\ref{emulti}) is known to not always hold even if $T_i$'s are given by commuting measure preserving transformations.

We will begin with proving a.u. convergence of the averages (\ref{emulti}) for all $x\in L^1$ under the assumption that the sequence $\mb n(k)\to \ii$ in question remains in a sector of $\mbb N_0^d$ (see Theorem \ref{tsec} below).

\begin{df}
A sequence $\{ \mb n(k)\} \su \mbb N_0^d$ is said to {\it remain in a sector of $\mbb N_0^d$} if there exists
a constant $0<c_0<\ii$ such that $\displaystyle\frac{n_i(k)}{n_j(k)}\leq c_0$ for all $1\leq i,j\leq d$ and $k$ \ ($n_j(k) \neq 0$).
\end{df}

Our argument is based on the following result due to A. Brunel \cite{br} (see \cite[Ch.\,6, \S\,6.3, Theorem 3.4]{kr}).
Although it was originally formulated for commuting contractions in a commutative $L^1$-\,space, one can see that the proof applies when $L^1=L^1(\mc M,\tau)$.

\begin{teo}\label{tbr}
There exist a constant $\chi_d>0$ and a family $\{ a_{\mb n}>0: \mb n\in \mbb N_0^d\}$
with $\sum a_{\mb n}=1$ such that, for
any positive commuting contractions $T_1,\dots,T_d$ of $L^1$, the operator
$S=\sum a_{\mb n}T_1^{n_1}\cdot \dotsc \cdot T_d^{n_d}$ satisfies the inequality
\begin{equation}\label{ebr}
\frac 1{n^d}\sum_{i_1=0}^{n-1}\dotsc \sum_{i_d=0}^{n-1}T_1^{i_1}\cdot \dotsc \cdot T_d^{i_d}(x)\leq
\frac {\chi_d}{n_d}\sum_{j=0}^{n_d-1}S^j(x)
\end{equation}
for some $n_d$ and all $n=1,2,\dots$ and $x\in L_+^1$.
\end{teo}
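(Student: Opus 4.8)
The plan is to remove the operators from the problem and reduce the operator inequality (\ref{ebr}) to a scalar inequality between the multi-index coefficients of its two sides. Writing $T^{\mb m}=T_1^{m_1}\cdots T_d^{m_d}$ for $\mb m\in\mbb N_0^d$, commutativity gives $T^{\mb m}T^{\mb m'}=T^{\mb m+\mb m'}$, so that if $S=\sum_{\mb n}a_{\mb n}T^{\mb n}$ then $S^j=\sum_{\mb m}a^{*j}_{\mb m}\,T^{\mb m}$, where $a^{*j}$ denotes the $j$\,-\,fold convolution of the sequence $a=\{a_{\mb n}\}$ on the semigroup $\mbb N_0^d$. Since every $T^{\mb m}$ is positive and $x\in L^1_+$, each $T^{\mb m}(x)\ge0$; consequently a coefficient-wise domination of the expansion of the left-hand side by that of the right-hand side already forces (\ref{ebr}). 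It therefore suffices to exhibit a probability sequence $a$ with $a_{\mb n}>0$ for all $\mb n$ and, for each $n$, an integer $n_d=n_d(n)$ with
\[
\frac{\chi_d}{n_d}\sum_{j=0}^{n_d-1}a^{*j}_{\mb m}\ge\frac1{n^d}\quad\text{for every }\mb m\in\{0,\dots,n-1\}^d.
\]
Because $a$, $n_d$ and $\chi_d$ are chosen purely at the level of this scalar sequence, the resulting $\chi_d$ is automatically uniform over all admissible $T_1,\dots,T_d$, exactly as the statement requires.

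Next I would read $a$ as the one-step law of a random walk $X_j$ on $\mbb N_0^d$, so that $a^{*j}_{\mb m}=\mbb P(X_j=\mb m)$ and the left-hand side above is $\chi_d$ times the normalized expected time the walk spends at $\mb m$ during $[0,n_d)$. The goal becomes to choose a step law whose time-averaged occupation measure dominates $n^{-d}$ uniformly over the cube $\{0,\dots,n-1\}^d$, and this is where the real difficulty lies. A step law with finite mean obeys the law of large numbers, so $X_j$ concentrates, up to fluctuations of order $\sqrt j$, near the ray spanned by its drift vector; its occupation then fills only a thin tube about that ray and reaches the far corners of the cube, such as $(n,0,\dots,0)$, with no more than exponentially small mass. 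Hence the step law must have infinite mean. I would take $a=\nu^{\otimes d}$ for a full-support heavy-tailed law $\nu$ on $\mbb N_0$ with $\nu_k\asymp k^{-1-\al}$, $0<\al<1$ (full support also secures $a_{\mb n}>0$ for every $\mb n$ for free). The coordinates are then independent and each lies in the domain of attraction of a one-sided $\al$\,-\,stable law, so $X_j/j^{1/\al}$ converges to a product of stable variables whose density is strictly positive throughout the open positive cone; the walk genuinely spreads over $(0,j^{1/\al})^d$ rather than hugging a single ray.

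Granting the corresponding lattice local limit theorem, the choice $n_d\asymp n^{\al}$ (for a suitably small $\al=\al(d)$) makes $j^{1/\al}\asymp n$ over a positive fraction of the averaging range, where $a^{*j}_{\mb m}\asymp n^{-d}\prod_{i=1}^d g(m_i/n)$ with $g$ the stable density; since $g$ is bounded below on compact subsets of $(0,\ii)$, averaging over $j$ delivers the required bound for every $\mb m$ in the interior bulk of the cube. The main obstacle is uniformity down to the faces and corners, where some coordinates $m_i$ are small, $g$ degenerates to $0$, and the bulk estimate is useless. For such $\mb m$ I would instead use small values of $j$ and exploit the heavy tail directly: a single large jump carries one coordinate to $\approx n$ while the atom of $\nu$ at $0$ holds the remaining coordinates at their prescribed small values, contributing mass of polynomial size (rather than the exponentially small mass a finite-mean walk would give), which survives the normalization by $n_d$ precisely because $n_d$ is only a small power of $n$. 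Balancing these two regimes against one common $n_d$, across the whole hierarchy of faces on which several coordinates are simultaneously small, is the delicate combinatorial core of the argument; carrying it out produces a constant $\chi_d$ depending only on $d$, whereupon the coefficient domination of the first step upgrades to the operator inequality (\ref{ebr}).
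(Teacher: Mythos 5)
Your opening reduction is correct, and it is worth noting that it is precisely the observation on which the paper itself relies: since every $T^{\mb m}$ is positive and, by commutativity, $S^j=\sum_{\mb m}a^{*j}_{\mb m}T^{\mb m}$ with nonnegative coefficients, the operator inequality (\ref{ebr}) applied to $x\in L^1_+$ follows from the scalar bound $\frac{\chi_d}{n_d}\sum_{j<n_d}a^{*j}_{\mb m}\ge n^{-d}$ for $\mb m\in\{0,\dots,n-1\}^d$ (one may discard the positive terms with $\mb m$ outside the cube). The paper gives no proof beyond citing Brunel \cite{br} and Krengel \cite[Ch.\,6, \S\,6.3, Theorem 3.4]{kr} and remarking that the commutative proof ``applies when $L^1=L^1(\mc M,\tau)$'' --- and the justification for that remark is exactly your first step: all operator-theoretic content is exhausted by positivity, and what remains is scalar combinatorics insensitive to the commutativity of $\mc M$. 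Your product ansatz is also morally the classical construction: a product law $a=\nu^{\otimes d}$ corresponds to $S=h(T_1)\cdots h(T_d)$ with $h(s)=\sum_k\nu_k s^k$, and Brunel's coefficients arise from $h(s)=1-(1-s)^{\gm}$, for which $\nu_k\asymp k^{-1-\gm}$; your stability index $\al$ plays the role of $\gm$, with the difference that the classical argument proceeds by explicit generating-function and binomial estimates where you invoke stable local limit theorems.

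The genuine gap is that your argument stops exactly where the theorem's content begins: the \emph{uniform} lower bound on the occupation sum over the whole cube, for one common $n_d=n_d(n)$ and one constant $\chi_d$. Concretely: (i) the exponent is never fixed --- running your own single-big-jump estimate at $\mb m=(1,n,\dots,n)$ with $n_d\asymp n^{\bt}$ forces $\bt+(1+\al)(d-1)\le d$, so with $\bt=\al$ one needs $\al d\le 1$; thus ``suitably small $\al=\al(d)$'' must be pinned down to $\al<1/d$ (or $\al=1/d$ with logarithmic care), and this constraint interacts with the bulk regime, which wants $n_d\asymp n^{\al}$; (ii) the mixed regime, where some coordinates satisfy $1\ll m_i\ll n$, is untreated, and it is not covered by either of your two estimates: all coordinates share the same time $j$, and for a one-sided walk with an atom at $0$ one has $\mbb P(X_j=m)$ exponentially small once $j\gg m$ (the number of nonzero steps is at most $m$), so the usable $j$-window is dictated by the smallest coordinate, and one must check that this single window, multiplied across single-big-jump factors $\asymp j\,m_i^{-1-\al}$ for the large coordinates and local-limit factors for the matched ones, still clears $\chi_d^{-1}n_d\,n^{-d}$ uniformly over the entire hierarchy of faces. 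You explicitly defer this (``the delicate combinatorial core''), but that balancing \emph{is} the theorem; as written, the proposal establishes the (correct) reduction and a plausible construction, not the inequality (\ref{ebr}).
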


We will need Yeadon's maximal ergodic inequality for the averages (\ref{e13}) \cite[Theorems 2.1]{cl}:

\begin{teo}\label{tye} Let $T\in DS^+$,  and let $\{ A_n\}$ be given by (\ref{e13}).
Then for every $x\in L^1_+$ and $\nu>0$
there exists a projection $e\in \mc P(\mc M)$ such that
$$
\tau(e^\pp)\leq \frac{\| x\|_1}\nu \text{ \ \ and \ \ } \sup_n\| eA_n(x)e\|_\ii\leq \nu.
$$
\end{teo}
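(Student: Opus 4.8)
The plan is to deduce the stated weak-type $(1,1)$ bound from a noncommutative Hopf maximal ergodic inequality, following the philosophy of Garsia's proof in the commutative case but replacing the pointwise maximum by a recursively constructed positive operator. Write $S_n(y)=\sum_{k=0}^{n-1}T^k(y)$, so that $A_n=\frac1nS_n$. Since $T\in DS^+$ forces $0\le T^k\mb 1\le\mb 1$, one has $S_n(x)-\nu n\,\mb 1\le S_n(x-\nu\mb 1)$; hence, putting $b=x-\nu\mb 1\in L^1+\mc M$, the bound $\sup_n\|eA_n(x)e\|_\ii\le\nu$ will follow (using $S_n(x)\ge 0$) once I produce a projection $e$ with $eS_n(b)e\le 0$ for all $n$. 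Taking $q=e^\pp=\bigvee_{n\ge1}\{S_n(b)>0\}$, the supremum of the spectral projections of the partial sums, every $e=q^\pp$ lies under $\{S_n(b)\le0\}$, so $eS_n(b)e\le0$ holds automatically; thus the whole theorem reduces to the single trace estimate
\[
\nu\,\tau(q)\le\tau(xq)\le\|x\|_1 ,
\]
the right inequality being $\tau(xq)=\tau(x^{1/2}qx^{1/2})\le\tau(x)$, and the left one being exactly the Hopf-type inequality $\tau(bq)\ge0$.

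To attack the Hopf inequality I would build the noncommutative analogue of $\max(0,S_1,\dots,S_n)$ by the recursion $u_0=0$, $u_n=\big(b+Tu_{n-1}\big)_+$. Positivity of $T$ yields, by induction, the domination $u_n\ge S_k(b)$ for every $k\le n$ (from $u_n\ge b+Tu_{n-1}\ge S_k(b)+T^ku_{n-k}$ and $T^ku_{n-k}\ge0$), so the supports of the $u_n$ capture all the partial sums. Writing $v_n=b+Tu_{n-1}$ and $p_n=\{v_n>0\}$, the projection $p_n$ is spectral for $v_n$ and therefore commutes with it, which gives the crucial identity $u_n=p_nv_np_n=p_nbp_n+p_nTu_{n-1}p_n$; this is the device that survives non-commutativity, since it never requires comparing $p_n$ with $b$ or $Tu_{n-1}$ separately. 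Taking traces and using $\tau(p_nTu_{n-1}p_n)=\tau(Tu_{n-1}\,p_n)\le\tau(Tu_{n-1})\le\tau(u_{n-1})$ (positivity and $L^1$-contractivity of $T$) delivers the one-step estimate $\tau(b\,p_n)\ge\tau(u_n)-\tau(u_{n-1})$, while the trace subadditivity $\tau\big((A+B)_+\big)\le\tau(A_+)+\tau(B_+)$ applied to $v_n$ keeps each $u_n$ integrable, with $\tau(u_n)\le n\|x\|_1$.

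The main obstacle is precisely the passage from these one-step estimates to the single projection $q$. In the commutative argument $\max(0,S_1,\dots,S_N)$ is an honest increasing family whose support set is $\{\sup_{k\le N}S_k>0\}$, and Hopf's inequality drops out by telescoping; but the self-adjoint operator order is not a lattice, so there is no genuine maximum, the recursively defined $u_n$ need not increase, and their support projections need not be nested. Consequently the telescoping only yields the averaged bound $\nu\,\frac1N\sum_{n\le N}\tau(p_n)\le\|x\|_1$, whereas what is needed is a bound on $\tau(q)$ itself. Upgrading the averaged estimate to the genuine maximal inequality $\tau(bq)\ge0$ is the technical heart of Yeadon's theorem and is where the real work lies; I would carry it out on the finite truncations $q_N=\bigvee_{n\le N}\{S_n(b)>0\}$, control $\nu\,\tau(q_N)\le\|x\|_1$ by Yeadon's careful organization of the construction, and then let $N\to\ii$ using normality of $\tau$. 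The finite-trace bookkeeping along the way—ensuring every projection encountered has finite co-trace—is managed by the domination $u_n\ge S_k(b)$ together with the contraction estimate $T^k\mb 1\le\mb 1$.
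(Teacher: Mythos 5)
Your preparatory work is correct as far as it goes: the reduction ``find $e$ with $eS_n(b)e\le 0$ for all $n$, where $b=x-\nu\mb 1$'' is valid, the recursion $u_0=0$, $u_n=(b+Tu_{n-1})_+$ does satisfy $u_n\ge S_k(b)$ for $k\le n$, the identity $u_n=p_nv_np_n$ and the one-step estimate $\tau(bp_n)\ge\tau(u_n)-\tau(u_{n-1})$ are right (one should also check $\tau(p_n)<\ii$, which follows since $p_n=\{x+Tu_{n-1}>\nu\mb 1\}$ and $x+Tu_{n-1}\in L^1_+$), and your diagnosis of why the commutative telescoping collapses is accurate. But the proposal is not a proof: at the decisive point you write that you would control $\nu\,\tau(q_N)\le\|x\|_1$ ``by Yeadon's careful organization of the construction'' --- that is, you appeal to the proof of the very theorem you were asked to prove. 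What you actually establish, the averaged bound $\nu\,\frac1N\sum_{n\le N}\tau(p_n)\le\|x\|_1$, is strictly weaker than the weak-type $(1,1)$ maximal inequality, and nothing in the text bridges the gap. For the record, the paper itself supplies no proof here either: Theorem \ref{tye} is quoted from \cite[Theorem 2.1]{cl}, which is Yeadon's 1977 inequality \cite{ye} transferred to $DS^+$ via the extension of positive $L^1$--$L^\ii$ contractions to Dunford--Schwartz operators; so the benchmark is Yeadon's classical argument, which your sketch gestures at but never reproduces.

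There is also a structural problem with the target you chose. Setting $e^\pp=q=\bigvee_n\{S_n(b)>0\}$ commits you to bounding the trace of a join of non-commuting spectral projections, which is formally \emph{stronger} than the theorem: the conclusion $eS_n(b)e\le0$ does not force $e\le\{S_n(b)\le 0\}$. A $2\times2$ example: for $y=\operatorname{diag}(1,-3)$ and $e$ the projection onto the span of $(1,1)/\sqrt2$ one has $eye=-e\le0$ while $e\not\le\{y\le0\}$; accordingly, Yeadon's theorem produces \emph{some} projection with small co-trace, not the complement of the join, and for joins one only has the useless subadditive bound $\tau(q_N)\le\sum_{n\le N}\tau(\{S_n(b)>0\})$. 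Moreover, the machinery you built cannot even see $q_N$: the domination $u_n\ge S_k(b)$ gives no relation between $s(u_n)$ and $\{S_k(b)>0\}$ in the projection lattice, since operator domination does not dominate spectral projections (if $A\ge B$ with $A\ge0$, the projection $\{B>0\}$ need not lie under $s(A)$). So the projections $p_n$ you control are simply unrelated to $q_N$, and the ``upgrade'' you defer is not a technical refinement of your estimates but a different construction altogether: one must build $e$ directly, with the trace control $\nu\,\tau(e^\pp)\le\tau(x e^\pp)\le\|x\|_1$ wired into an inductive choice of projections, which is precisely the content of Yeadon's proof and is absent from yours.
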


The next result is a noncommutative extension of  \cite[Ch.6, \S 6.3, Theorem 3.5]{kr}.

\begin{teo}\label{tsec}
Let $\{ T_1, \dots, T_d\}\su DS^+$ be a commuting family. Then for every $x\in L^1$ the averages $A_{\mb n(k)}(x)$ given by (\ref{emulti}) converge a.u. to some $\wh x\in L^1$ for any sequence $\mb n(k)\to \ii$
that remains in a sector of \ $\mbb N_0^d$.
\end{teo}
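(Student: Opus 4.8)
The plan is to apply the Banach principle of Theorem \ref{tau1} with $p=1$ and $M_k=A_{\mb n(k)}$, so that it suffices to do two things: (i) verify the weak maximal hypothesis of Theorem \ref{tau1} for the given sectorial sequence, which guarantees that the set of $x\in L^1$ for which $\{A_{\mb n(k)}(x)\}$ converges a.u. is closed in $L^1$; and (ii) exhibit a dense subset of $L^1$ on which a.u. convergence already holds. Fix a sequence $\mb n(k)\to\ii$ remaining in a sector with constant $c_0$, and observe that each $A_{\mb n}$ is a positive $L^1$-contraction, being a convex combination of products $T_1^{i_1}\cdot\dotsc\cdot T_d^{i_d}$ of commuting operators from $DS^+$; hence $A_{\mb n}\in DS^+$ and each $A_{\mb n}\colon L^1\to(L^0,t_\tau)$ is positive and continuous, as required by Theorem \ref{tau1}.

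The core of the argument is the maximal inequality, and this is where Theorem \ref{tbr} and Theorem \ref{tye} enter. First I would form the single operator $S=\sum a_{\mb n}T_1^{n_1}\cdot\dotsc\cdot T_d^{n_d}$ from Theorem \ref{tbr}; since it is an absolutely convergent convex combination of products of operators from $DS^+$, one checks that $S\in DS^+$, so that Yeadon's inequality (Theorem \ref{tye}) is available for its Ces\`aro averages $A_m(S)$. Writing $N(k)=\max_i n_i(k)$ and letting $C_N(x)=\frac1{N^d}\sum_{i_1=0}^{N-1}\dotsc\sum_{i_d=0}^{N-1}T_1^{i_1}\cdot\dotsc\cdot T_d^{i_d}(x)$ denote the symmetric (cube) averages, I would establish for every $x\in L^1_+$ the two comparisons
\[
A_{\mb n(k)}(x)\leq c_0^d\,C_{N(k)}(x)\qquad\text{and}\qquad C_N(x)\leq\chi_d\,A_m(S)(x),
\]
the first because the box $\prod_i\{0,\dots,n_i(k)-1\}$ sits inside the cube $\{0,\dots,N(k)-1\}^d$ while $n_1(k)\cdot\dotsc\cdot n_d(k)\ge(N(k)/c_0)^d$ (all summands being positive since $x\ge0$), and the second being precisely inequality (\ref{ebr}). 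Combining these with Theorem \ref{tye} applied to $S$ yields, for each $x\in L^1_+$ and $\nu>0$, a projection $e$ with $\tau(e^\pp)\leq c_0^d\chi_d\|x\|_1/\nu$ and $\sup_k\|eA_{\mb n(k)}(x)e\|_\ii\leq\nu$. Decomposing an arbitrary $x\in L^1$ into its four positive parts then gives the hypothesis of Theorem \ref{tau1}, so the a.u.-convergence set is closed in $L^1$.

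For the dense set I would take $L^1\cap L^2$, which is dense in $L^1$ (it contains $L^1\cap\mc M$). For $x\in L^1\cap L^2\su L^2$, Theorem \ref{tau} (with $p=2$) gives a.u. convergence of $A_{\mb n}(x)$ as $\mb n\to\ii$ with no sector restriction, hence in particular along $\mb n(k)$. Thus a.u. convergence holds on a dense subset and on a closed subset of $L^1$, and therefore on all of $L^1$, producing a limit $\wh x\in L^0$. Finally, since $A_{\mb n(k)}(x)\to\wh x$ a.u. implies convergence in measure and $A_{\mb n(k)}(x)\prec\prec x$ (as $A_{\mb n}\in DS^+$), the Fatou argument used in the proof of Theorem \ref{t13} gives $\mu_t(\wh x)\prec\prec\mu_t(x)$ and hence $\wh x\in L^1$ with $\|\wh x\|_1\leq\|x\|_1$.

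I expect the main obstacle to be the maximal inequality. One must verify with care that $S\in DS^+$, so that Yeadon's inequality rather than only a commutative comparison is at hand; one must correctly splice the purely combinatorial sector estimate onto Brunel's operator comparison (\ref{ebr}), tracking the constants $c_0^d$ and $\chi_d$; and one must pass cleanly from $L^1_+$ to all of $L^1$. By contrast, the reduction to sequences and the identification of the limit are routine given the results quoted above.
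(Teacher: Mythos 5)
Your proposal is correct and follows essentially the same route as the paper's proof: the maximal inequality is obtained exactly as in the paper by splicing the sector estimate $A_{\mb n(k)}(x)\leq c_0^d\,C_{N(k)}(x)$ onto Brunel's inequality (\ref{ebr}) and applying Yeadon's Theorem \ref{tye} to the operator $S\in DS^+$; the closedness of the a.u.-convergence set is then deduced from a Banach principle --- the paper verifies that $\{A_{\mb n(k)}\}$ is b.u.e.m. at zero on $L^1_+$ and invokes \cite[Lemma 4.1]{li} together with Proposition \ref{pau}, which is in substance what your appeal to Theorem \ref{tau1} plus the four-positive-parts decomposition amounts to; your dense set is $L^1\cap L^2$ via Theorem \ref{tau} at $p=2$, whereas the paper uses $L^1\cap L^p$, $p>2$, via \cite[Theorem 6.6]{jx} directly --- an immaterial difference, and there is no circularity since Theorem \ref{tau} precedes Theorem \ref{tsec} and does not rely on its conclusion; finally, your identification of $\wh x\in L^1$ by $\mu_t(\wh x)\prec\prec\mu_t(x)$ and Fatou matches the end of the proof of Theorem \ref{t13}, to which the paper also defers.

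The one point you omit is the final paragraph of the paper's proof: the a.u. limit must be shown to be the \emph{same} operator $\wh x$ for every sequence $\mb n(k)\to\ii$ remaining in a sector, whereas your argument produces, for each fixed sectorial sequence, a limit that a priori depends on that sequence. The remedy is the paper's interleaving argument: given two sectorial sequences $\{\mb n(k)\}$ and $\{\mb m(k)\}$ with sector constants $c_0$ and $c_1$, the interleaved sequence $\mb m(1),\mb n(1),\mb m(2),\mb n(2),\dots$ tends to $\ii$ and remains in a sector with constant $\max\{c_0,c_1\}$, so your argument applies to it and forces the two limits to coincide. This is a short, routine addition, but without it the statement ``converge a.u. to some $\wh x\in L^1$ for any sequence'' is established only in the weaker, sequence-dependent reading.
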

\begin{proof}
Assume that $x\in L^1_+$ and let $m_k=\max\{n_1(k),\dots, n_d(k)\}$, $k=1,2,\dots$ Since the sequence  $\{ \mb n(k)\}$
remains in a sector of $\mbb N_0^d$, there is $0<c_0<\ii$ such that $m_k\leq c_0n_i(k)$ for all $1\leq i\leq d$ and $k$.
Then, in accordance with Theorem \ref{tbr}, we have
\[
0 \leq A_{\mb n(k)}(x)\leq\frac{c_0^d}{m_k^d}\sum_{i_1=0}^{m_k-1}\dotsc\sum_{i_d=0}^{m_k-1}T_1^{i_1}\cdot \dotsc \cdot
T_d^{i_d}(x)\leq \frac {\chi_dc_0^d}{(m_k)_d}\sum_{j=0}^{(m_k)_d-1}S^j(x).
\]
As $S\in DS^+$, it follows from Theorem \ref{tye} that for a given $\nu>0$ there exists $e\in \mc P(\mc M)$ such that
\[
\tau(e^\pp)\leq \frac{\| x\|_1}\nu \text{ \ \ and \ \ } \sup_k\left \|e\frac 1{(m_k)_d}\sum_{j=0}^{(m_k)_d-1}S^j(x)e\right \|_\ii \leq \nu,
\]
which implies that
\[
\sup_k \| eA_{\mb n(k)}(x)e\|_\ii \leq \chi_dc_0^d\,\nu.
\]
Therefore the sequence $\{ A_{\mb n(k)}\}$ is b.u.e.m. at zero on $L^1_+$, which, by \cite[Lemma 4.1]{li}, implies that
it is also b.u.e.m. at zero on $L^1$. This in turn entails, by Proposition \ref{pau}, that we only need to have a.u. convergence
of the sequence $\{ A_{\mb n(k)}(x)\}$ for every $x$ in a dense subset of $L^1$. Let $p>2$. Since the set
$L^1\cap L^p$ is dense in $L^1$, this property is granted by \cite[Theorem 6.6]{jx}, and we conclude that
the sequence $\{ A_{\mb n(k)}(x)\}$ converges a.u. for all $x\in L^1$.

Now, if $x\in L^1$ and $\mb n(k)\to \ii$ remaining in a sector of $\mbb  N_0^d$, we can follow the argument at the end of
proof of Theorem \ref{t13} to show that $A_{\mb n(k)}(x)\to \wh x$ a.u. for some $\wh x\in L^1$.

Let $\mb m(k)$ be another sequence in $\mbb N_0^d$ that tends to $\ii$ remaining in a sector of $\mbb  N_0^d$.
Then the sequence $\mb m(1), \mb n(1), \mb m(2), \mb n(2), \dots$ also tends to $\ii$ and  remains in a sector of
$\mbb  N_0^d$ and thus the sequence $A_{\mb m(1)}(x), A_{\mb n(1)}(x) , A_{\mb m(2)}(x), A_{\mb n(2)}(x), \dots$
converges a.u. to some $\widetilde x\in L^1$. This implies that $\widetilde x=\wh x$, so $A_{\mb m(k)}(x)\to \wh x$
a.u., and the proof is complete.
\end{proof}

Now we expand Theorem \ref{tsec} to the space $\mc R_\tau$:
\begin{teo}\label{tsec0}
Let $\{ T_1, \dots, T_d\}\su DS^+$ be a commuting family. Then, given $x\in \mc R_\tau$,
the averages (\ref{emulti}) converge a.u. to some
$\wh x\in \mc R_\tau$ for any sequence $\mb n(k)\to \ii$
that remains in a sector of \ $\mbb N_0^d$.
\end{teo}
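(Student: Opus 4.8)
The plan is to reduce Theorem \ref{tsec0} to Theorem \ref{tsec} by exactly the truncation argument that passes from Theorem \ref{t43} to Theorem \ref{t13}, with the multiparameter $L^1$ statement \ref{tsec} now playing the role that \ref{t43} played in the single-parameter setting. First I would fix a sequence $\mb n(k)\to\ii$ that remains in a sector of $\mbb N_0^d$. Since $\mc R_\tau$ is a linear space and a.u. convergence is preserved under finite linear combinations (intersect the relevant projections), and since the positive parts of the real and imaginary parts of $x$ lie in $\mc R_\tau$ by full symmetry, I may assume $x\ge0$. Letting $\{e_\lb\}_{\lb\ge0}$ be the spectral family of $x$, I set $x_m=\int_{1/m}^\ii\lb\,de_\lb\in L^1$ and $y_m=\int_0^{1/m}\lb\,de_\lb$ for each $m\in\mbb N$, so that $x=x_m+y_m$ and $0\leq y_m\leq\frac1m\mb 1\in\mc M$.

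Next I would run the a.u. Cauchy estimate verbatim from the proof of Theorem \ref{t13}. Fix $\ep>0$. For each $m$, Theorem \ref{tsec} yields $\wh x_m\in L^1$ with $A_{\mb n(k)}(x_m)\to\wh x_m$ a.u. along the fixed sector sequence, so there is $e_m\in\mc P(\mc M)$ with $\tau(e_m^\pp)\leq\ep/2^m$ and $\|(A_{\mb n(k)}(x_m)-A_{\mb n(l)}(x_m))e_m\|_\ii<1/m$ for all $k,l\ge N(m)$. The tail is handled by contractivity on $\mc M$: each product $T_1^{i_1}\cdot\dotsc\cdot T_d^{i_d}$ is a composition of operators in $DS^+$, hence lies in $DS^+$, and the convex average $A_{\mb n}$ of such products is again in $DS^+$; in particular $\|A_{\mb n(k)}(y_m)\|_\ii\leq\|y_m\|_\ii\leq1/m$. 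Consequently, for all $k,l\ge N(m)$,
\[
\|(A_{\mb n(k)}(x)-A_{\mb n(l)}(x))e_m\|_\ii\leq\|(A_{\mb n(k)}(x_m)-A_{\mb n(l)}(x_m))e_m\|_\ii+\frac2m<\frac3m.
\]
Putting $e=\bigwedge_m e_m$ gives $\tau(e^\pp)\leq\ep$, and, since $e\leq e_m$, the same bound $\|(A_{\mb n(k)}(x)-A_{\mb n(l)}(x))e\|_\ii<3/m$ holds for $k,l\ge N(m)$. Thus $\{A_{\mb n(k)}(x)\}$ is a.u. Cauchy, and completeness of $L^0$ with respect to a.u. convergence produces $\wh x\in L^0$ with $A_{\mb n(k)}(x)\to\wh x$ a.u.

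Finally I would identify the limit as a member of $\mc R_\tau$ by the submajorization argument of Theorem \ref{t13}. Since a.u. convergence implies convergence in measure and the unit ball of $L^1+\mc M$ is closed in the measure topology, we get $\wh x\in L^1+\mc M$, and measure convergence gives $\mu_t(A_{\mb n(k)}(x))\to\mu_t(\wh x)$ a.e. on $(0,\ii)$. As $A_{\mb n}\in DS^+$, one has $\mu_t(A_{\mb n(k)}(x))\prec\prec\mu_t(x)$; integrating over $(0,s)$ and applying Fatou's lemma yields $\wh x\prec\prec x$, whence $\wh x\in\mc R_\tau$ because $\mc R_\tau$ is fully symmetric and $x\in\mc R_\tau$.

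I expect no serious obstacle here: all the genuinely hard content of the multiparameter problem—the Brunel comparison of Theorem \ref{tbr}, Yeadon's maximal inequality, and the passage through a dense subset via Proposition \ref{pau}—is already packaged inside Theorem \ref{tsec}. The only points meriting care are the verification that each $A_{\mb n}$ is a positive Dunford–Schwartz operator (needed both for the $\mc M$-contractivity controlling the tail $y_m$ and for the submajorization controlling the limit) and the bookkeeping that the a.u. convergence furnished by Theorem \ref{tsec} is taken along the one fixed sector sequence $\mb n(k)$ throughout; both are routine.
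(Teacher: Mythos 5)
Your proposal is correct and is essentially the paper's own proof: the authors likewise deduce Theorem \ref{tsec0} by invoking Theorem \ref{tsec} for the $L^1$ case and then repeating the truncation argument of Theorem \ref{t13} (spectral splitting $x=x_m+y_m$, the $\mc M$-contractivity bound on the tail, a.u.\ completeness of $L^0$ via Proposition \ref{p2}, and the submajorization step placing $\wh x$ in $\mc R_\tau$). You have merely written out in full what the paper compresses into ``repeating the proof of Theorem \ref{t13}.''
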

\begin{proof}
By Theorem  \ref{tsec}, the averages $A_{\mb n(k)}(x)$ converge a.u. whenever $x\in L^1$.
Repeating the proof of Theorem \ref{t13} and using
Proposition \ref{p2} below, which is just a restatement of \cite[Theorem 2.3]{cls}, one can see that
the averages $A_{\mb n(k)}(x)$ converge a.u. for each $x\in \mc R_\tau$ to some $\wh x\in \mc R_\tau$.
\end{proof}
Using completeness of $L^0$ with respect to the a.u. convergence, we obtain the following.
\begin{pro}\label{p2}
Let $\mbb N_0^d\ni \mb n(k)\to \ii$, and let $\{ x_{\mb n(k)}\}\su L^0$ be such that for every $\ep>0$ there is
$e\in \mc P(\mc M)$ with $\tau(e^\pp)\leq \ep$ for which $\| (x_{\mb n(k)}-x_{\mb n(l)})e\|_\ii\to 0$
($\| e(x_{\mb n(k)}-x_{\mb n(l)})e\|_\ii\to 0$) as $\mb n(k),\ \mb n(l)\to \ii$. Then there is $\wh x\in L^0$ such that
$x_{\mb n(k)}\to \wh x$ a.u. (respectively, b.a.u.).
\end{pro}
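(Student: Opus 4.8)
The plan is to read Proposition \ref{p2} as a sequential completeness statement and to reduce it to the completeness of $L^0$ with respect to a.u.\ (and b.a.u.) convergence, established in \cite[Theorem 2.3]{cls} (see also \cite[Remark 2.4]{cls}). First I would regard $\{x_{\mb n(k)}\}_{k\in\mbb N}$ simply as an ordinary sequence $y_k:=x_{\mb n(k)}$ in $L^0$: the index runs over $\mbb N$, and $\mb n(k)\to\ii$ means by definition that $k\to\ii$, so the limits ``as $\mb n(k),\,\mb n(l)\to\ii$'' are the usual limits as $k,l\to\ii$. With this identification the hypothesis asserts exactly that for every $\ep>0$ there is $e\in\mc P(\mc M)$ with $\tau(e^\pp)\leq\ep$ such that $\|(y_k-y_l)e\|_\ii\to0$ (respectively $\|e(y_k-y_l)e\|_\ii\to0$) as $k,l\to\ii$; in other words, $\{y_k\}$ is a Cauchy sequence for a.u.\ (respectively b.a.u.) convergence, and what must be shown is that such a sequence converges a.u.\ (respectively b.a.u.).

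For the a.u.\ case one can argue directly. Since $\|(y_k-y_l)e\|_\ii\leq\dt$ together with $\tau(e^\pp)\leq\ep$ forces $y_k-y_l\in V(\ep,\dt)$, the sequence $\{y_k\}$ is Cauchy in the measure topology $t_\tau$; as $(L^0,t_\tau)$ is complete, there is $\wh x\in L^0$ with $y_k\to\wh x$ in measure. Now fix $\ep>0$ and let $e$ be the witnessing projection, so that $y_ke$ is uniformly Cauchy and hence converges in $\mc M$ to some $w$; since $y_ke\to\wh xe$ in measure as well, uniqueness of limits gives $w=\wh xe$, whence $\|(y_k-\wh x)e\|_\ii\to0$ with $\tau(e^\pp)\leq\ep$. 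As $\ep>0$ was arbitrary, $y_k\to\wh x$ a.u., which, translated back, is the assertion $x_{\mb n(k)}\to\wh x$. The b.a.u.\ case runs in parallel once $\{y_k\}$ is known to be Cauchy in measure, the compressions $ey_ke$ then converging in $\mc M$ to $e\wh xe$.

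I expect the delicate step to be precisely this last point in the b.a.u.\ case: two-sided control $\|e(y_k-y_l)e\|_\ii\leq\dt$ does not by itself exhibit $y_k-y_l$ in a neighborhood $V(\ep,\dt)$, so the implication ``b.a.u.\ Cauchy $\Rightarrow$ Cauchy in measure'' (and thus the $\tau$-measurability of the patched limit $\wh x$) is not immediate and must be established as in \cite{cls}. Since that work already supplies completeness of $L^0$ for both modes of convergence, the cleanest presentation is to invoke \cite[Theorem 2.3]{cls} after the reduction of the first paragraph, exactly as the statement advertises it to be a restatement of that theorem.
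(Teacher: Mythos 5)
Your proposal is correct and takes essentially the same route as the paper, which gives no independent argument: it presents Proposition \ref{p2} precisely as a restatement of the completeness of $L^0$ with respect to a.u.\ (respectively b.a.u.) convergence from \cite[Theorem 2.3]{cls} (cf.\ \cite[Remark 2.4]{cls}), after the same identification of the multi-indexed Cauchy hypothesis with an ordinary Cauchy sequence $y_k=x_{\mb n(k)}$. One cosmetic repair in your direct a.u.\ sketch: $y_ke$ need not itself lie in $\mc M$ (only the differences $(y_k-y_l)e$ do), so fix a large $k_0$, let $(y_k-y_{k_0})e\to w_0$ uniformly in $\mc M$, and identify $\wh x e=y_{k_0}e+w_0$ by uniqueness of limits in measure, which gives $\|(y_k-\wh x)e\|_\ii\to 0$ exactly as you intended.
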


Next we give an application of Theorem \ref{tsec0} to fully symmetric spaces. The proof of this theorem
is similar to that of Theorem \ref{t46}.

\begin{teo}\label{tsec01}
Let $E=E(\mc M,\tau)$ be a fully symmetric space such that
$\mb 1\notin E$, and let $\{ T_1, \dots, T_d\}\su DS^+$ be a commuting family.
Then, given $x\in E$, the averages (\ref{emulti}) converge a.u. to some
$\wh x\in E$ for any sequence $\mb n(k)\to \ii$
that remains in a sector of \ $\mbb N_0^d$.
\end{teo}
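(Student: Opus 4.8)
The plan is to follow the template of the proof of Theorem~\ref{t46}, replacing the single-operator input Theorem~\ref{t13} by its multiparameter analogue Theorem~\ref{tsec0}. The only genuinely new point is a preliminary observation needed to invoke Proposition~\ref{p12}: although Theorem~\ref{tsec01} does not explicitly assume $\tau(\mb 1)=\ii$, the hypothesis $\mb 1\notin E$ forces it. Indeed, if $\tau(\mb 1)<\ii$, then $\mb 1\in\mc M$ and $\mu_t(\mb 1)$ is bounded and supported on $(0,\tau(\mb 1))$, so $\mb 1\in L^1\cap\mc M\su E$, contradicting $\mb 1\notin E$. Hence $\tau(\mb 1)=\ii$, and Proposition~\ref{p12} yields $E\su\mc R_\tau$.

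Given this, I would fix $x\in E\su\mc R_\tau$ and a sequence $\mb n(k)\to\ii$ remaining in a sector of $\mbb N_0^d$, and apply Theorem~\ref{tsec0} directly to the commuting family $\{T_1,\dots,T_d\}\su DS^+$: it provides an element $\wh x\in\mc R_\tau$ with $A_{\mb n(k)}(x)\to\wh x$ a.u. It remains to promote the membership $\wh x\in\mc R_\tau$ to $\wh x\in E$. For this I would reproduce the submajorization argument from the proof of Theorem~\ref{t13}. Each product $T_1^{i_1}\cdot\dotsc\cdot T_d^{i_d}$ is again a positive Dunford--Schwartz operator (products of $DS^+$ operators are $DS^+$), and since the Dunford--Schwartz operators form a convex set, the average $A_{\mb n}$ defined by (\ref{emulti}) also lies in $DS^+$. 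Consequently, by the submajorization property $T(x)\prec\prec x$ for $T\in DS$ recalled before (\ref{e13}), we have $A_{\mb n}(x)\prec\prec x$, that is, $\mu_t(A_{\mb n}(x))\prec\prec\mu_t(x)$ for every $\mb n$.

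Finally, a.u. convergence $A_{\mb n(k)}(x)\to\wh x$ implies convergence in measure, hence $\mu_t(A_{\mb n(k)}(x))\to\mu_t(\wh x)$ a.e. on $(0,\ii)$. Fatou's lemma applied to the submajorization inequalities on each interval $(0,s)$ then gives $\mu_t(\wh x)\prec\prec\mu_t(x)$, exactly as in Theorem~\ref{t13}. Since $E$ is fully symmetric and $x\in E$, this forces $\wh x\in E$ (with $\|\wh x\|_E\leq\|x\|_E$), completing the argument. I do not expect a serious obstacle: the analytic heart of the matter, namely the a.u. convergence and the identification of the limit inside $\mc R_\tau$, is already supplied by Theorem~\ref{tsec0}, and the remaining work is the routine verification that $A_{\mb n}\in DS^+$ together with the Fatou/submajorization passage to the limit. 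The one place demanding a moment's care is the opening reduction to $\tau(\mb 1)=\ii$, which is implicit in $\mb 1\notin E$ rather than assumed outright.
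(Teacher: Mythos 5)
Your proposal is correct and follows essentially the same route as the paper, which simply notes that the proof is similar to that of Theorem~\ref{t46}: Proposition~\ref{p12} gives $E\su\mc R_\tau$, Theorem~\ref{tsec0} supplies the a.u.\ limit $\wh x\in\mc R_\tau$, and the submajorization $\mu_t(\wh x)\prec\prec\mu_t(x)$ from the proof of Theorem~\ref{t13} together with full symmetry of $E$ yields $\wh x\in E$. Your opening observation that $\mb 1\notin E$ forces $\tau(\mb 1)=\ii$ (via $L^1\cap\mc M\su E$ when $\tau(\mb 1)<\ii$) is a worthwhile extra precaution, since the statement of Theorem~\ref{tsec01} omits the hypothesis $\tau(\mb 1)=\ii$ that Proposition~\ref{p12} formally requires.
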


\begin{cor}\label{tsec02}
Let $\{ T_1, \dots, T_d\}\su DS^+$ be a commuting family, $1\leq p<\infty$. Then for every $x\in L^p$ the averages $A_{\mb n(k)}(x)$ given by (\ref{emulti}) converge  a.u. to some $\wh x\in L^p$ for any sequence $\mb n(k)\to\ii$
that remains in a sector of $\mbb N_0^d$.
\end{cor}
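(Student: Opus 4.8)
The statement is Corollary \ref{tsec02}, which asserts a.u. convergence of the multiparameter averages $A_{\mb n(k)}(x)$ along sector sequences for every $x\in L^p$, $1\leq p<\ii$. My plan is to reduce this to the two results already established: Theorem \ref{tsec0} handles the case $p=1$ directly, and Theorem \ref{tsec01} handles the case $1<p<\ii$ via the fully symmetric space machinery. The whole point of a corollary is that it should follow with almost no new work, so I would simply split into these two cases and invoke the appropriate theorem in each.

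For the case $p=1$, nothing needs to be done: $L^1=L^1(\mc M,\tau)\su\mc R_\tau$, so Theorem \ref{tsec0} already gives a.u. convergence of $A_{\mb n(k)}(x)$ to some $\wh x\in\mc R_\tau$ for every $x\in L^1$. It remains only to check that the limit $\wh x$ actually lies in $L^1$; this follows exactly as in the proof of Theorem \ref{tsec} (or the end of Theorem \ref{t13}): since $A_{\mb n(k)}(x)\to\wh x$ a.u. implies convergence in measure, and each $A_{\mb n(k)}$ is a contraction of $L^1$ whose unit ball is closed in the measure topology, we get $\wh x\in L^1$. For the case $1<p<\ii$ (in the genuinely infinite setting $\tau(\mb 1)=\ii$), I would observe that $E=L^p(\mc M,\tau)$ is a fully symmetric space with $\mb 1\notin E$—indeed $\mu_t(\mb 1)\equiv 1\notin L^p(0,\ii)$ when $\tau(\mb 1)=\ii$. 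Hence Theorem \ref{tsec01} applies with $E=L^p$ and yields a.u. convergence of $A_{\mb n(k)}(x)$ to some $\wh x\in L^p$ for every $x\in L^p$, along any sector sequence. When $\tau(\mb 1)<\ii$ the averages converge a.u. in $L^1$ and $L^p\su L^1$, so the $p=1$ case already subsumes everything.

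The only step requiring a moment's care is the hypothesis $\mb 1\notin E=L^p$ needed to apply Theorem \ref{tsec01}; this is where one must split on whether $\tau(\mb 1)$ is finite or infinite, since $\mb 1\in L^p$ precisely when $\tau(\mb 1)<\ii$. I expect no serious obstacle: the two governing theorems were proved precisely to make this corollary immediate, and the sector restriction, commutativity of the family, and the mechanism for passing from a dense subspace to all of $L^p$ (via b.u.e.m.\ at zero and Proposition \ref{pau}) are all already built into Theorems \ref{tsec} and \ref{tsec0}. In short, the proof is a two-line case distinction: invoke Theorem \ref{tsec0} for $p=1$ and Theorem \ref{tsec01} with $E=L^p$ for $1<p<\ii$, each time identifying the limit in the correct space by the full-symmetry (respectively closed-unit-ball) argument.
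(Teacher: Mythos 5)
Your proof is correct and matches the paper's intended derivation: the corollary is stated without proof precisely because it follows at once from Theorem \ref{tsec01} with $E=L^p$ (noting $\mb 1\notin L^p$ when $\tau(\mb 1)=\ii$) together with Theorem \ref{tsec}/Theorem \ref{tsec0} covering the case $p=1$ and the finite-trace case where $\mc R_\tau=L^1\supset L^p$. Your case distinction and the identification of the limit in $L^p$ via full symmetry (or the measure-closedness of the unit ball, as in the proof of Theorem \ref{t43}) are exactly the ingredients the authors rely on.
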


Let $\{T_{\mb u}: \mb u\in \mbb R_+^d\}$ be a semigroup of linear contractions of $L^1$ which is {\it $L^1$-\,continuous} in the interior of $\mbb R_+^d$, that is,
\[
\| T_{\mb u}(x)-T_{\mb v}(x)\|_1\to 0 \text{ \ \,as \ } \mb u\to\mb v
\]
for all  $x\in L^1$ and $\mb v=(v_1,\dots, v_d)\in\mbb R_+^d$ with $v_i>0$, $1\leq i\leq d$.

Fix $x\in L^1$. Then the function $\varphi(\mb u)=\tau(T_{\mb u}(x)y)$ is continuous on $\mbb R_+^d$ for every $y\in\mc M=(L^1)^\prime$. Therefore, if $\nu$ is the Lebesgue measure on $\mbb R_+^d$, then the function $f_x: \mbb R_+^d \to L^1$ defined by $f_x(\mb u) = T_{\mb u}(x)$ is weakly $\nu$\,-\,measurable. Since $f_x(\mbb R_+^d )$ is a separable subset of $L^1$, Pettis theorem \cite[Ch.\,V, \S\,4]{yo} entails that the function $f_x$ is strongly $\nu$\,-\,measurable and the function $\|f_x({\mb u})\|_1=\|T_{\mb u}(x)\|_1$ is $\nu$\,-\,measurable on $\mbb R_+^d$. Since $\|T_{\mb u}(x)\|_1\leq\|x\|_1$, it follows that the function $\|T_{\mb u}(x)\|_1$ is integrable on $(0,t]^d$ for any $t>0$. Then, by \cite[Ch.\,V, \S\,5, Theorem 1]{yo}, the function $T_{\mb u}(x)$ is Bochner $\nu$\,-\,integrable on $(0,t]^d$, $t>0$. Therefore, for any $x\in L^1$ and $t>0$ there exists the integral
\begin{equation}\label{eflow}
A_t(x)=\frac 1{t^d}\int_{(0,t]^d}T_{\mb u}(x)d\mb u.
\end{equation}

The next theorem is a noncommutative extension of the theorem of Dunford and Schwartz \cite[Theorem VIII.7.17, p.708]{ds} (see  also \cite[\S\,6.3, Theorem 3.7]{kr}). 

\begin{teo}\label{tflow}
Let $\{ T_{\mb u}:\mb u\in \mbb R_+^d\}\su DS^+$ be a semigroup $L^1$-\,continuous on the interior of $\mbb R_+^d$. If $x \in L^1$, then the averages $A_t(x)$ given by (\ref{eflow}) converge  a.u. to some $\wh x\in L^1$.
\end{teo}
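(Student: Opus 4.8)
The plan is to reduce the continuous $d$-parameter average to the discrete multiparameter theorem already established (Corollary~\ref{tsec02}), to derive a continuous maximal inequality from Brunel's Theorem~\ref{tbr} and Yeadon's Theorem~\ref{tye}, and finally to upgrade a.u. convergence from a dense subset to all of $L^1$ by means of the Banach principle of Proposition~\ref{pau}. First I would introduce the smoothing operator $P=\int_{(0,1]^d}T_{\mb u}\,d\mb u$ together with the time-one maps $T_i=T_{\mb e_i}$, $1\le i\le d$, where $\mb e_i$ is the $i$-th coordinate vector. Since $\{T_{\mb u}\}\su DS^+$, averaging shows $P\in DS^+$ and each $T_i\in DS^+$, while the semigroup property makes $P,T_1,\dots,T_d$ pairwise commuting. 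Splitting $(0,n]^d$ into unit cubes and using $T_{\mb k+\mb s}=T_1^{k_1}\cdot\dots\cdot T_d^{k_d}\,T_{\mb s}$ gives, for every $n\in\mbb N$, the identity $A_n(x)=A_{(n,\dots,n)}(Px)$, where the right-hand side is the discrete average \eqref{emulti} of $Px$ for the family $\{T_i\}$. As the diagonal sequence $(n,\dots,n)$ remains in a sector of $\mbb N_0^d$, Corollary~\ref{tsec02} (with $p=1$) yields that $A_n(x)\to\wh x$ a.u. along integer $t=n$ for some $\wh x\in L^1$.

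The hard part will be passing from integer to continuous parameter while keeping \emph{one-sided} (a.u.), as opposed to merely bilateral (b.a.u.), control. For $x\ge0$ and $n\le t<n+1$, monotonicity of the integral gives $(n/t)^dA_n(x)\le A_t(x)\le((n+1)/t)^dA_{n+1}(x)$; writing $A_t(x)=(n/t)^dA_n(x)+R_t$ with $0\le R_t\le U_n:=((n+1)/t)^dA_{n+1}(x)-(n/t)^dA_n(x)$, the difficulty is that $0\le R_t\le U_n$ only yields the bilateral estimate $\|eR_te\|_\ii\le\|eU_ne\|_\ii$, whereas a.u. convergence requires $\|R_te\|_\ii$. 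My resolution is to first treat the dense subset $L^1\cap\mc M$: for bounded $x$ one has $\|A_t(x)\|_\ii\le\|x\|_\ii$ and $\wh x\in\mc M$, so $\|R_t\|_\ii\le2\|x\|_\ii$, and the operator inequality $R_t^2\le\|R_t\|_\ii R_t$ converts bilateral smallness into one-sided smallness through $\|R_te\|_\ii^2=\|eR_t^2e\|_\ii\le\|R_t\|_\ii\|eU_ne\|_\ii$. Choosing $e$ from the integer a.u. convergence and noting that the coefficients $(n/t)^d,((n+1)/t)^d$ tend to $1$ uniformly in $t\in[n,n+1)$ while $A_{n+1}(x)-A_n(x)\to0$ a.u., one gets $\|U_ne\|_\ii\to0$ and hence $\sup_{n\le t<n+1}\|(A_t(x)-\wh x)e\|_\ii\to0$. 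This gives $A_t(x)\to\wh x$ a.u. for every $x\in(L^1\cap\mc M)_+$, and by linearity for all $x\in L^1\cap\mc M$.

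Next I would prove the continuous maximal inequality. For $x\in L^1_+$ and $t\in[n,n+1]$ one has $A_t(x)\le2^dA_{n+1}(x)=2^dA_{(n+1,\dots,n+1)}(Px)$, and Brunel's Theorem~\ref{tbr} dominates the diagonal discrete average by a single-operator Ces\`aro average $\frac{\chi_d}{m}\sum_{j=0}^{m-1}S^j(Px)$ with $S\in DS^+$. Thus $\sup_{t\ge1}A_t(x)\le2^d\chi_d\sup_m\frac1m\sum_{j=0}^{m-1}S^j(Px)$, and Yeadon's Theorem~\ref{tye}, applied to $S$ and $Px\in L^1_+$, produces for each $\nu>0$ a projection $e$ with $\tau(e^\pp)\le\|x\|_1/\nu$ and $\sup_{t\ge1}\|eA_t(x)e\|_\ii\le2^d\chi_d\nu$. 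Consequently $\{A_t\}_{t\ge1}$ is b.u.e.m. at zero on $L^1_+$, hence on $L^1$ by \cite[Lemma~4.1]{li}.

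Finally I would combine these ingredients. For an arbitrary sequence $t_k\to\ii$ the maps $M_k=A_{t_k}\colon L^1\to L^0$ are b.u.e.m. at zero and converge a.u. on the dense set $L^1\cap\mc M$, so by Proposition~\ref{pau} the set of a.u. convergence is closed, hence equal to $L^1$. The interlacing argument at the end of the proof of Theorem~\ref{tsec} then shows the limit is independent of the chosen sequence, yielding $A_t(x)\to\wh x$ a.u. as $t\to\ii$; and $\wh x\in L^1$ because each $A_t$ is an $L^1$-contraction whose unit ball is closed in the measure topology. The main obstacle throughout is the second paragraph: securing one-sided rather than bilateral control of the integer-to-continuous gap, which is exactly why one passes through the bounded dense subset before invoking the Banach principle.
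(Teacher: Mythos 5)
Your reduction and your maximal inequality are essentially the paper's own: the paper also forms $y=\int_{(0,1]^d}T_{\mb u}(x)\,d\mb u$ (your $Px$), identifies $A_n(x)$ with the diagonal discrete average of $y$ and invokes Theorem \ref{tsec}, and derives the same continuous-parameter maximal inequality (\ref{e2020}) from Brunel's Theorem \ref{tbr} plus Yeadon's Theorem \ref{tye}. On the dense set, however, your positivity sandwich and the upgrade $\|R_te\|_\ii^2=\|eR_t^2e\|_\ii\leq\|R_t\|_\ii\|eU_ne\|_\ii$ are correct but unnecessary: for $x\in L^1\cap\mc M$ the paper's computation (\ref{eflow1}) bounds $\|A_t(x)-A_{[t]}(x)\|_\ii$ by $\big(1-[t]^d/t^d\big)\|x\|_\ii$ plus a similar term, which tends to $0$ in the \emph{uniform} norm with no projections at all, so the ``hard part'' of your second paragraph dissolves for bounded $x$.

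The genuine gap is your final step. From ``for every sequence $t_k\to\ii$ the averages $A_{t_k}(x)$ converge a.u.\ to $\wh x$'' you conclude ``$A_t(x)\to\wh x$ a.u.\ as $t\to\ii$.'' Almost uniform convergence is not convergence in a topology: the net statement requires, for each $\ep>0$, a \emph{single} projection $e$ with $\tau(e^\pp)\leq\ep$ and $\sup_{t\geq T}\|(A_t(x)-\wh x)e\|_\ii\to0$ as $T\to\ii$. In your argument the projection furnished by Proposition \ref{pau} depends on the chosen sequence $\{t_k\}$, and the interlacing argument only identifies the limits along different sequences; it never produces one projection valid on the whole continuum tail $\{t\geq T\}$. (For convergence in measure the sequential criterion is legitimate because $t_\tau$ is metrizable; for a.u.\ convergence it is not.) Note also that Proposition \ref{pau} is intrinsically sequential: its proof controls the exceptional projection by meeting \emph{finitely many} support projections $q_{k,n}$ per block, and this count has no analogue over the uncountable index set $\{t\geq 1\}$. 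The paper instead argues on the net directly: it splits $x=x_{m_1}+z_{m_1}$ with $x_{m_1}\in L^1\cap\mc M$ and $\|z_{m_1}\|_1$ small, controls the remainder uniformly in $t$ by (\ref{e2020}) and the bounded part by its net Cauchy property, obtaining one projection $h$ and one $t_1$ serving all $t,t'\geq t_1$, and then uses completeness of $L^0$. You already possess both ingredients (your step 2 gives genuine net convergence on $L^1\cap\mc M$, your step 3 the uniform-in-$t$ maximal inequality), so the repair is to replace the appeal to Proposition \ref{pau} by this explicit three-term net estimate. Your instinct that one-sided control is the delicate point is sound, but it bites at the remainder $A_t(z_{m_1})$ --- where the maximal inequality is only bilateral --- rather than on the bounded dense set; indeed the paper's own final display is bilateral and, as written, yields only b.a.u.\ Cauchy-ness at that stage, so that is exactly where extra care, not the sequence-to-net shortcut, is required.
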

\begin{proof}
One can verify that if $x\in L^1$ and $y=\int_{(0,1]^d}T_{\mb u}(x)d\mb u$, then
\[
\int_{(0,n]^d}T_{\mb u}(x)d\mb u=\sum_{i_1=0}^{n-1}\dotsc\sum_{i_d=0}^{n-1}T_{(1,0,\dots,0)}^{i_1}\cdot \dotsc
\cdot T_{(0,\dots,0,1)}^{i_d}(y), \ \ n=1,2, \dots
\]
Therefore, by Theorem \ref{tsec} applied to $y\in L^1$, there exists $\wh x\in L^1$ such that
the  sequence $\{ A_n(x)\}_{n=1}^\ii$, given by (\ref{eflow}), converges a.u. to $\wh{x}$.

If $t>1$ and $x\in L^1\cap \mc M$, then
\begin{equation}\label{eflow1}
\begin{split}
\| A_t(x)&-A_{[t]}(x)\|_\ii=\left \| \frac 1{t^d}\int_{(0,t]^d}T_{\mb u}(x)d\mb u-
\frac 1{[t]^d}\int_{(0,[t]]^d}T_{\mb u}(x)d\mb u \right \|_\ii\\
&=\left \| \frac 1{t^d}\int_{(0,t]^d\sm [0,[t]]^d}T_{\mb u}(x)d\mb u+
\left (\frac 1{t^d}-\frac 1{[t]^d}\right )\int_{(0,[t]]^d}T_{\mb u}(x)d\mb u \right \|_\ii\\
&\leq \frac 1{t^d}(t^d-[t]^d)\| x\|_{\ii}+\left ( \frac 1{[t]^d}-\frac 1{t^d}\right )[t]^d\| x\|_{\ii} \to 0
\end{split}
\end{equation}
as $t\to \ii$.

Since $\{A_{[t]}(x)\}=\{ A_n(x)\}_{n=1}^\ii\to\wh x$ a.u., given $\ep>0$, there exists a projection $e\in \mc P(\mc M)$ such that
\[
\tau(e^\pp)\leq\ep\text { \ \ and \ \ } \| (A_{[t]}(x)-\wh x)e\|_\ii \to 0.
\]
By (\ref{eflow1}), we have
\[
\|(A_t(x)-\wh x)e\|_\ii\leq\|(A_t(x) -  A_{[t]}(x))e\|_\ii + \| (A_{[t]}(x)- \wh x)e\|_\ii\to 0
\]
as $t\to \ii$,  hence $A_t(x)\to\wh x$ \ a.u.

If $z\in L^1_+$ and $y=\int_{[0,1]^d}T_{\mb u}(z)d\mb u$, then we have as before
\[
\begin{split}
0 \leq A_t(z)&\leq\frac 1{[t]^d}\int_{(0,t]^d}T_{\mb u}(z)d\mb u\leq\frac 1{[t]^d}\int_{(0,[t]+1]^d}T_{\mb u}(z)d\mb u\\
&=\left (\frac{[t]+1}{[t]}\right )^d\frac 1{([t]+1)^d}\sum_{i_1=0}^{[t]}\dotsc\sum_{i_d=0}^{[t]}
T_{(1,0,\dots,0)}^{i_1}\cdot \dotsc \cdot T_{(0,\dots,0,1)}^{i_d}(y).
\end{split}
\]
Since $y\in L^1_+$, for every $\nu>0$, we can find, as it was carried out in the proof of Theorem \ref{tsec} with the help
of Theorem \ref{tbr}, a projection $e\in \mc P(\mc M)$ such that
\[
\tau(e^\pp)\leq\frac{2\| y\|_1}\nu\leq \frac {2\| z\|_1}\nu\text { \ \ and \ \ }
\sup_{t>0}\| eA_t(z)e\|_\ii<\chi_d\,\nu.
\]
Hence, given $z\in L_+^1$ and $\nu>0$, there exists $e\in\mc P(\mc M)$ such that
\begin{equation}\label{e2020}
\tau(e^\pp)\leq\frac{2\|z\|_1}\nu\text{ \ \ and \ \ } \sup_{t>0}\|eA_t(z)e\|_\ii<\chi_d\,\nu.
\end{equation}

Let now $x\in L^1_+$, and let $\{ e_\lb\}_{\lb\ge 0}$ be its spectral family. Given $m\in\mbb N$, denote $x_m= \int_0^m \lb de_\lb$ and $z_m = x - x_m$. Then $\{x_m\}\su L^1_+ \cap \mc M$,  $\{z_m\}\su L^1_+$ and $\|z_m\|_1 \to 0$.

Fix $\ep>0$ and $\dt>0$, and let $\nu=\frac\dt{3\chi_d}$. Since $\|z_m\|_1\to0$, there exists $m_1\in\mbb N$ such that 
$\|z_{m_1}\|_1<\frac{\ep\nu}4$. By (\ref{e2020}), there exists a projection $e\in\mc P(\mc M)$ such that
\begin{equation}\label{eflow3}
\tau(e^\pp)\leq\frac{2\|z_{m_1}\|_1}\nu\leq\frac \ep 2 \text { \ \ and \ \ } \sup_{t>0} \| eA_t( z_{m_1})e\|_\ii<\frac\dt 3.
\end{equation}
Since $x_{m_1}\in L^1\cap\mc M$, we have $A_t(x_{m_1} )\to\wh x_{m_1}$ b.a.u., so the net $\{A_t(x_{m_1})\}_{t>0}$ is b.a.u. Cauchy. Therefore, there exist $g\in \mc P(\mc M)$ and $t_1>0$ such that
\begin{equation}\label{eflow4}
\tau(g^\pp)<\frac\ep2 \ \ \text{\ and \ \ }
\|g(A_t( x_{m_1})-A_{t'}( x_{m_1}))g\|_\ii<\frac\dt3.
\end{equation}
for all $t, t' \ge t_1$.

If $h=e\wedge g$, then it follows from (\ref{eflow3})  and (\ref{eflow4}) that $\tau(h^\pp)<\ep$ and
\[
\begin{split}
\| h(A_t(x)-A_{t'}(x))h\|_\ii&\leq \|h(A_t(x_{m_1})- A_{t'}( x_{m_1}))h\|_\ii\\
&+\| hA_t(z_{m_1})h\|_\ii + \| hA_{t'}( z_{m_1})h\|_\ii < \delta.
\end{split}
\]
Thus, the net $\{A_t(x)\}_{t>0}$ is a.u. Cauchy. As $L^0$ is complete with respect to b.a.u. convergence, we conclude that the averages $A_t(x)$ converge a.u. to some $\wh x\in L^0$. As in the proof of Theorem \ref{t13}, we conclude that $\wh x\in L^1$.
\end{proof}

In particular, we have the following (cf.\ \cite[Theorem 6.8]{jx} and Remarks right after it).
\begin{cor}\label{tflow1}
Let $\{ T_s\}_{s\ge 0} \su DS^+$  be a semigroup that is strongly continuous on $L^1$ at every $s>0$. Then the averages
\[
\frac 1t\int_0^tT_s(x)ds
\]
converge a.u. for every $x\in L^1$ as $t\to \ii$.
\end{cor}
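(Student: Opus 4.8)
The plan is to recognize this corollary as the one-dimensional instance of Theorem \ref{tflow}. First I would set $d=1$, so that $\mbb R_+^d=\mbb R_+=[0,\ii)$ and its interior is the open half-line $(0,\ii)$. The averages (\ref{eflow}) then collapse to
\[
A_t(x)=\frac 1t\int_0^tT_s(x)\,ds,
\]
which are exactly the averages appearing in the statement. Thus the object to be analyzed is literally $A_t(x)$ from Theorem \ref{tflow} in the case $d=1$.

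Next I would check that the hypotheses match. A one-parameter semigroup $\{T_s\}_{s\ge0}\su DS^+$ is precisely a family indexed by $\mbb R_+^1$ consisting of positive Dunford-Schwartz operators with $T_0=I$ and $T_{s+s'}=T_sT_{s'}$. The assumption that $\{T_s\}$ be strongly continuous on $L^1$ at every $s>0$ means, by definition, that $\|T_s(x)-T_{s'}(x)\|_1\to0$ as $s\to s'$ for all $x\in L^1$ and every $s'>0$; this is exactly the $L^1$-continuity of $\{T_{\mb u}\}$ on the interior of $\mbb R_+^1$ required in Theorem \ref{tflow}. The measurability and Bochner-integrability discussion preceding that theorem applies verbatim with $d=1$: the map $s\mapsto\tau(T_s(x)y)$ is continuous for each $y\in\mc M$, the range $\{T_s(x):s\ge0\}$ is a separable subset of $L^1$, and $\|T_s(x)\|_1\le\|x\|_1$, so $s\mapsto T_s(x)$ is strongly measurable and Bochner integrable on $(0,t]$. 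Hence the integral defining $A_t(x)$ exists for every $x\in L^1$ and $t>0$.

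With these identifications settled, Theorem \ref{tflow} applies directly and gives that for every $x\in L^1$ the averages $A_t(x)$ converge a.u. to some $\wh x\in L^1$ as $t\to\ii$, which is the desired conclusion. I do not expect any genuine obstacle here, since the corollary is a pure specialization; the only point worth a sentence is the translation of the phrase \emph{strongly continuous on $L^1$ at every $s>0$} into the $L^1$-continuity hypothesis on the open half-line, and this is immediate from the definitions. Accordingly, the proof reduces to invoking Theorem \ref{tflow} with $d=1$ after recording this matching of data.
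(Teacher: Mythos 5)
Your proposal is correct and matches the paper exactly: the paper states this corollary with the phrase ``In particular'' immediately after Theorem \ref{tflow}, precisely because it is the $d=1$ specialization, with \emph{strong continuity at every $s>0$} translating to $L^1$-continuity on the interior of $\mbb R_+^1=(0,\ii)$ as you note. Your additional verification that the Bochner-integrability discussion carries over verbatim with $d=1$ is a sensible bit of diligence the paper leaves implicit.
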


Let $\{T_{\mb u}: \mb u\in\mbb R_+^d\} \su DS^+$ be a semigroup $L^1$-\,continuous in the interior of $\mbb R_+^d$, and let
\[
A_t(x)=\frac 1{t^d}\int_{(0,t]^d}T_{\mb u}(x)d\mb u, \ \ x\in L^1, \ t>0.
\]
Then
\[
\|A_t(x)\|_1\leq \| x\|_1\text{ \ for all \ }x\in L^1 \text{ \ and \ } \|A_t(x)\|_\ii\leq \| x\|_\ii\text{ \  for all \ }x\in L^1\cap L^\ii.
\]
Therefore, in view of \cite[Proposition 1.1]{cl}, $A_t$ admits a unique extension to a positive Dunford-Schwartz operator acting in $L^1+ \mc M$, which will be also denoted by $A_t$, so we have
\[
A_t(x)=\frac 1{t^d}\int_{(0,t]^d}T_{\mb u}(x)d\mb u,\ \ x\in L^1+\mc M, \ t>0.
\]

Repeating the argument in the proof of Theorem \ref{t13}, we expand Theorem \ref{tflow} to the fully symmetric space
$\mc R_\tau$:

\begin{teo}\label{tflow1}
Let $\{T_{\mb u}:\mb u\in\mbb R_+^d\}\su DS^+$ be a semigroup $L^1$-\,continuous in the interior of $\mbb R_+^d$ on $L^1$. Then for every $x\in\mc R_\tau$ the averages $A_t(x)$  converge b.a.u. as $t\to \ii$ to some $\wh x\in\mc R_\tau$.
\end{teo}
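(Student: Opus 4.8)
The plan is to mirror the proof of Theorem \ref{t13} verbatim, with Theorem \ref{tflow} playing the role that Theorem \ref{t43} played there and with the Dunford--Schwartz extension of $A_t$ to $L^1+\mc M$ (constructed in the paragraph preceding the statement) supplying the contractivity and submajorization we need. First I would reduce to $x\in(\mc R_\tau)_+$ by splitting $x$ into real and imaginary, then positive and negative parts, each of which lies in $\mc R_\tau$. Let $\{e_\lb\}_{\lb\ge0}$ be the spectral family of $x\ge0$ and, for $m\in\mbb N$, set $x_m=\int_{1/m}^\ii\lb\,de_\lb$ and $y_m=\int_0^{1/m}\lb\,de_\lb$. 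Since $x\in\mc R_\tau$ gives $\tau\{x>1/m\}=\tau(e_{1/m}^\pp)<\ii$ together with $\mu_t(x)\to0$, one checks that $x_m\in L^1$, while $0\le y_m\le\frac1m\mb 1$, so $\|y_m\|_\ii\le1/m$ and $x=x_m+y_m$.

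Fix $\ep>0$. By Theorem \ref{tflow}, $A_t(x_m)\to\wh x_m\in L^1$ a.u.\ as $t\to\ii$, so for each $m$ there is $e_m\in\mc P(\mc M)$ with $\tau(e_m^\pp)\le\ep/2^m$ and $\|(A_t(x_m)-\wh x_m)e_m\|_\ii\to0$; consequently there is $t(m)$ with $\|(A_s(x_m)-A_{s'}(x_m))e_m\|_\ii<1/m$ for all $s,s'\ge t(m)$. Because the extended $A_t$ is a Dunford--Schwartz operator, $\|A_t(y_m)\|_\ii\le\|y_m\|_\ii\le1/m$, and therefore
\[
\|(A_s(x)-A_{s'}(x))e_m\|_\ii\le\|(A_s(x_m)-A_{s'}(x_m))e_m\|_\ii+\|A_s(y_m)e_m\|_\ii+\|A_{s'}(y_m)e_m\|_\ii<\frac3m
\]
for all $s,s'\ge t(m)$. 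Setting $e=\bigwedge_m e_m$ gives $\tau(e^\pp)\le\ep$ and $\|(A_s(x)-A_{s'}(x))e\|_\ii<3/m$ whenever $s,s'\ge t(m)$, i.e.\ the net $\{A_t(x)\}_{t>0}$ is a.u.\ Cauchy. Completeness of $L^0$ with respect to a.u.\ convergence then yields $\wh x\in L^0$ with $A_t(x)\to\wh x$ a.u.\ (in particular b.a.u., as claimed).

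It remains to place $\wh x$ in $\mc R_\tau$, which proceeds exactly as at the end of the proof of Theorem \ref{t13}. Almost uniform convergence implies convergence in measure, and since the unit ball of $L^1+\mc M$ is closed in the measure topology we obtain $\wh x\in L^1+\mc M$; measure convergence also forces $\mu_t(A_s(x))\to\mu_t(\wh x)$ a.e.\ on $(0,\ii)$ as $s\to\ii$. As each extended $A_s$ lies in $DS^+$, we have $A_s(x)\prec\prec x$, so the partial integrals of $\mu_t(A_s(x))$ are dominated by those of $\mu_t(x)$; passing to the limit by Fatou's lemma gives $\mu_t(\wh x)\prec\prec\mu_t(x)$, and full symmetry of $\mc R_\tau$ forces $\wh x\in\mc R_\tau$.

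The step I expect to need the most care is the completeness invocation: unlike in Theorem \ref{t13}, here $\{A_t(x)\}$ is a genuinely continuous net indexed by $t>0$ rather than a sequence, so the sequential completeness of $L^0$ under a.u.\ convergence \cite[Remark 2.4]{cls} does not apply verbatim. I would handle this either by restricting the a.u.-Cauchy estimate above to an arbitrary sequence $t_k\to\ii$ (for which sequential completeness applies) and then identifying the limits along interleaved sequences, as in the last paragraph of the proof of Theorem \ref{tsec}, or by appealing to a continuous-parameter analogue of Proposition \ref{p2}; the uniformity of the Cauchy bound in $s,s'$ for the single projection $e$ is precisely what makes this reduction go through.
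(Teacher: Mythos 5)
Your proof is correct and follows essentially the same route as the paper, which likewise proves the theorem by repeating the argument of Theorem \ref{t13} with Theorem \ref{tflow} in place of Theorem \ref{t43} and then deduces $\wh x\in\mc R_\tau$ via closedness of the unit ball of $L^1+\mc M$ in measure, Fatou's lemma, and full symmetry. Your closing remark about sequential versus net completeness of $L^0$ under a.u.\ convergence, handled by passing to sequences $t_k\to\ii$ and interleaving, addresses a point the paper glosses over and is resolved correctly.
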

\begin{proof}
By Theorem  \ref{tflow}, the averages $A_{t}(x)$ converge  a.u. whenever $x\in L^1$.
Then, repeating the proof of Theorem \ref{t13} and utilizing the completeness of $L^0$ with respect to a.u. convergence and the coincidence of measure and bilaterally measure topologies \cite[Therem 2.2]{cls}, we conclude that the averages $A_t(x)$ converge a.u. to some $\wh x\in \mc R_\tau$.
\end{proof}

As in Theorem \ref{tsec01}, we obtain the following.

\begin{teo}\label{tflow2}
Let $\tau(\mb 1)=\ii$, let $E=E(\mc M,\tau)$ be a fully symmetric space with $\mb 1\notin E$, and let $\{ T_{\mb u}:\mb u\in\mbb R_+^d\}\su DS^+$ be a semigroup $L^1$-\,continuous in the interior of $\mbb R_+^d$. Then for every $x\in E$, the averages $A_t(x)$ converge  b.a.u. as $t\to \ii$ to some $\wh x\in E$.
\end{teo}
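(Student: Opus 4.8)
The plan is to reduce this statement to the already-established $\mc R_\tau$ version, in exactly the way Theorem \ref{t46} was deduced from Theorem \ref{t13}. First I would invoke Proposition \ref{p12}: since $\tau(\mb 1)=\ii$ and $\mb 1\notin E$, that proposition yields the embedding $E\su\mc R_\tau$. Hence every $x\in E$ lies in $\mc R_\tau$, and Theorem \ref{tflow1} applies directly to produce some $\wh x\in\mc R_\tau$ with $A_t(x)\to\wh x$ b.a.u. as $t\to\ii$. This takes care of the convergence assertion; all that remains is to upgrade the membership of the limit from $\mc R_\tau$ to $E$.

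For that step I would reuse the submajorization argument from the proof of Theorem \ref{t13}. The paragraph preceding Theorem \ref{tflow1} extends each $A_t$ to a positive Dunford-Schwartz operator on $L^1+\mc M$; consequently $A_t\in DS^+$, and therefore $\mu_s(A_t(x))\prec\prec\mu_s(x)$ for every $t>0$. Since b.a.u. convergence implies convergence in measure, I would select a sequence $t_k\to\ii$ along which $A_{t_k}(x)\to\wh x$ in measure, and measure convergence then forces $\mu_s(A_{t_k}(x))\to\mu_s(\wh x)$ for almost every $s$ (just as in the proof of Theorem \ref{t13}, via \cite[Ch.\,II, \S\,2, Property 11$^\circ$]{kps}). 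Applying Fatou's lemma to the submajorization inequalities $\int_0^s\mu_r(A_{t_k}(x))dr\leq\int_0^s\mu_r(x)dr$ then gives $\mu_s(\wh x)\prec\prec\mu_s(x)$.

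Finally, because $E$ is fully symmetric and $x\in E$, the relation $\mu_s(\wh x)\prec\prec\mu_s(x)$ forces $\wh x\in E$, which completes the argument. I do not anticipate any serious obstacle here: the b.a.u. convergence and the limiting operator are handed to us by Theorem \ref{tflow1}, and the implication $\wh x\in\mc R_\tau\Rightarrow\wh x\in E$ is purely a matter of full symmetry combined with the submajorization already built into the proof of Theorem \ref{t13}. The one point that warrants a word of care is the continuous-parameter limit in the submajorization step, since $t$ ranges over $\mbb R_+$ rather than over $\mbb N$; but this is resolved simply by first extracting a sequence $t_k\to\ii$ along which measure convergence holds before invoking Fatou's lemma.
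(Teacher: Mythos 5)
Your proposal is correct and follows essentially the same route as the paper, which proves Theorem \ref{tflow2} by the one-line reduction ``as in Theorem \ref{tsec01}'' --- that is, Proposition \ref{p12} gives $E\su\mc R_\tau$, Theorem \ref{tflow1} supplies the limit $\wh x\in\mc R_\tau$, and the submajorization $\wh x\prec\prec x$ from the proof of Theorem \ref{t13} combined with full symmetry of $E$ yields $\wh x\in E$. Your extra care in extracting a sequence $t_k\to\ii$ before applying Fatou's lemma is sound (and, for the passage from b.a.u. to measure convergence, one may also cite the coincidence of the measure and bilateral measure topologies, \cite[Theorem 2.2]{cls}, as the paper does in the proof of Theorem \ref{tflow1}).
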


\section{Examples}
Every individual ergodic theorem for $\mc R_\tau$ above is valid for any noncommutative fully
symmetric space $E \su \mc R_\tau$ (with the limit $\wh x\in E$). In this section we give a few examples of
noncommutative fully symmetric subspaces of $\mc R_\tau$.

Since the case $\tau(\mb 1)<\ii$ is trivial in our context, assume that $\tau(\mb 1)=\ii$. As we have noticed
(Proposition \ref{p12}), a symmetric space $E \su L^1+\mc M$ is contained in $\mc R_\tau$ if and only if $\mb 1\notin E$.

1. Let $\Phi$ be an {\it Orlicz function}, that is, $\Phi:[0,\ii)\to [0,\ii)$ is a convex continuous at $0$ function
such that $\Phi(0)=0$ and $\Phi(u)>0$ if $u\ne 0$. Let
\[
L^\Phi=L^\Phi(\mc M, \tau)=\big\{ x \in L^0(\mc M, \tau): \ \tau\big(\Phi(a^{-1}|x|)\big)
<\ii \text{ \ for some \ } a>0\big\}.
\]
be the corresponding {\it noncommutative Orlicz space},  and let
\[
\| x\|_\Phi=\inf\big\{ a>0: \ \tau\big(\Phi(a^{-1}|x|)\big)\leq 1\big\}
\]
be the {\it Luxemburg norm} in $L^\Phi$ (see \cite{cl2}). Since  $\tau(\mathbf 1)=\ii$, we have $\tau\left (\Phi(a^{-1}\mb 1)\right )= \ii$ for all $a>0$, hence $\mb 1  \notin  L^\Phi$.

2. A space $(E,\|\cdot \|_E)$ is said to have {\it order continuous norm} if $\|x_n\|_E\downarrow 0$ whenever $x_n\in E_+$ and $x_n\downarrow 0$.

If $E=E(\mc M,\tau)$  is a noncommutative fully symmetric space  with order continuous norm, then
$\tau\left (\{|x|>\lb\}\right )<\ii$  for all $x\in E$ and $\lb>0$, so $E\su\mc R_\tau$.

3. Let $\varphi$ be an increasing  concave function on $[0,\ii)$ with $\varphi(0)=0$ and $\varphi(t)>0$ for some $t>0$, and let
\[
\Lb_\varphi(\mc M,\tau)=\left\{x \in L^0(\mc M,\tau): \ \|x \|_{\Lb_\varphi}=
\int_0^\ii \mu_t(x) d \varphi(t)<\ii\right\}
\]
be the corresponding {\it noncommutative Lorentz space}; see, for example, \cite{cs}. Since $\Lb_\varphi(0,\ii)$ is a fully symmetric function space \cite[Ch.\,II, \S\,5]{kps} and $\Lb_\varphi(0,\ii)\su\mc R_\mu(0,\ii)$ whenever $\varphi(\ii)=\ii$ \cite[Theorem 1]{mpr}, the fully symmetric noncommutative space $\Lb_\varphi(\mc M,\tau)$ is contained in $\mc R_\tau$.

4. Let $E(0,\ii)$ be a fully symmetric function space. Let $D_s: E(0,\ii) \to E(0,\ii)$, $s>0$, be the bounded linear operator given by $D_s(f)(t) = f(t/s)$, $t > 0$. The {\it Boyd index} $q_E$ is defined as
\[
q_E=\lim\limits_{s \to +0}\frac{\log s}{\log \|D_s\|}.
\]
It is known that $1\leq q_E\leq\ii$ \cite[II, Ch.2, Proposition 2.b.2]{lt}. Since $\|D_s\|\leq\max\{1,s\}$ \cite[II, Ch.\,2.b]{lt}, in the case $\mb 1\in E(\mc M,\tau)$, that is, when $\chi_{(0,\ii)}\in E(0,\ii)$ we have $D_s(\chi_{(0,\ii)})=\chi_{(0,\ii)}$  and $\|D_s\| =1$ for all $s\in (0,1)$, hence $q_E= \ii$ \cite[II, Ch.\,2, Proposition 2.b.1]{lt}. Thus, if $q_E<\ii$, then $\mb 1\notin  E(\mc M)$, and so $E(\mc M,\tau)\su\mc R_\tau$.


\begin{thebibliography}{99}

\bibitem{br} A. Brunel, The\' orem\` e ergodique ponctuel pour un semigroupe commutatif finiment engendr\' e de contractions de $L^1$, AIHP B {\bf 9} (1973), 327-343.

\bibitem{cs} V. Chilin, F. Sukochev, Weak convergence in non-commutsative symmetrc spaces, {\it J. Operator Theory} {\bf 31} (1994), 35-55.

\bibitem{cls} V. Chilin, S. Litvinov, and A. Skalski, A few remarks in non-commutative ergodic theory, {\it J. Operator Theory} {\bf 53}(2) (2005), 331-350.

\bibitem{cl0} V. Chilin, S. Litvinov, Uniform equicontinuity for sequences of homomorphisms into the ring of measurable
operators, {\it Methods of Funct. Anal. Top.} {\bf 12}(2) (2006), 124-130.

\bibitem{cl} V. Chilin, S. Litvinov, Ergodic theorems in fully symmetric spaces of $\tau$-measurable operators, {\it Studia Math.} {\bf 288}(2) (2015), 177-195.

\bibitem{cl2} V. Chilin, S. Litvinov,  Individual ergodic theorems in noncommutative Orlicz spaces, {\it Positivity} {\bf 21} (2017), 49-59.

\bibitem{cl1} V. Chilin, S. Litvinov, The validity space of Dunford-Schwartz pointwise ergodic theorem, {\it J. of Math. Anal. Appl.} {\bf 461} (2018), 234-247.

\bibitem{ddp} P. G. Dodds, T. K. Dodds, and B. Pagter, Noncommutative K\"othe duality, {\it Trans. Amer. Math. Soc.} {\bf 339}(2) (1993), 717-750.

\bibitem {ddst} P. G. Dodds, T. K. Dodds, F. A. Sukochev, and O. Ye. Tikhonov, A Non-commutative Yoshida-Hewitt theorem and convex sets of measurable operators closed locally in measure, {\it Positivity} {\bf 9} (2005), 457-484.

\bibitem{dp} P. G. Dodds, B. Pagter, The non-commutative Yosida-Hewitt decomposition revisited, {\it Trans. Amer. Math. Soc.} {\bf 364}(12)  (2012), 6425-6457.

\bibitem{ds} N. Dunford and J. T. Schwartz, {\it Linear Operators, Part I: General Theory}, John Willey and Sons (1988).

\bibitem{fk} T. Fack, H. Kosaki, Generalized $s$-numbers of $\tau$-measurable operators, {\it Pacific. J. Math.} {\bf 123} (1986), 269-300.

\bibitem{gg} M. Goldstein, G. Grabarnik, Almost sure convergence theorems in von Neumann algebras, {\it Israel J. Math.}
{\bf 76} (1991), 161-182.

\bibitem{gl} M. Goldstein, S. Litvinov, Banach principle in the space of $\tau$-measurable operators, {\it Studia Math.} {\bf 143} (2000), 33-41.

\bibitem{jx} M. Junge, Q. Xu, Noncommutative maximal ergodic theorems, {\it J. Amer. Math. Soc.} {\bf 20}(2) (2007), 385-439.

\bibitem{kps} S. G. Krein, Ju. I. Petunin, and E. M. Semenov, {\it Interpolation of Linear Operators}, Translations of Mathematical Monographs, Amer. Math. Soc., {\bf 54}, 1982.

\bibitem {ks} N. J. Kalton, F. A. Sukochev, Symmetric norms and spaces of operators, {\it J. Reine Angew. Math.} {\bf 621} (2008), 81-121.

\bibitem {kr} U. Krengel, {\it Ergodic Theorems}, Walter de Gruyer, Berlin-New York, 1985.

\bibitem {lt} J. Lindenstraus, L. Tsafriri, {\it Classical Banach spaces I-II}, Springer-Verlag, Berlin Heidelberg New York, 1977.

\bibitem{li} S. Litvinov, Uniform equicontinuity of sequences of measurable functions and noncommutative ergodic theorems,
{\it Proc. Amer. Math. Sci.} {\bf 140} (2012), 2401-2409.

\bibitem{li1} S. Litvinov, Almost uniform convergence in noncommutative Dunford-Schwartz ergodic theorem,
{\it C. R. Acad. Sci. Paris, Ser. I} {\bf 355} (2017) 977-980.

\bibitem{mpr} M. A. Muratov, Yu. S. Pashkova, B. A. Rubstein, Order convergence in ergodic theorems in Lorentz spaces, {\it Dynamical systems} {\bf 28} (2010), 81-88.

\bibitem{ne} E. Nelson, Notes on non-commutative integration, {\it J. Funct. Anal.} {\bf 15} (1974), 103-116.

\bibitem{pe} D. Petz, Ergodic theorems in von Neumann algebras, {\it Acta Sci. Math.} {\bf 46} (1983), 329-343.

\bibitem{px} G. Pisier, Q. Xu, Noncommutative $L^p$-spaces, in: {\it Handbook of the geometry of Banach spaces},
{\bf 2} (2003), 1459-1517.

\bibitem{sk} A. Skalski, On a classical scheme in noncommutative multiparameter ergodic theory, in: {\it Quantum Probability
and Infinite Dimensional Analysis}, QP-PQ 18, (M. Schurmann and U. Franz, editors), World Scientific, Singapore, 2005, 473-491.

\bibitem{ye0}  F. J. Yeadon, Non-commutative $L^p$-spaces, {\it Math. Proc. Camb. Phil. Soc.} {\bf 77} (1975), 91-102.

\bibitem{ye} F. J. Yeadon, Ergodic theorems for semifinite von Neumann algebras. I, {\it J. London Math. Soc.} {\bf 16}(2) (1977), 326-332.

\bibitem{yo} K. Yosida, Functional Analysis, Springer Verlag, Berlin-G\"ottingen-Heidelberg, 1965.

\end{thebibliography}
\end{document}